\newtheorem{theorem}{Theorem}[section]
\newtheorem{corollary}[theorem]{Corollary}
\newtheorem{lemma}[theorem]{Lemma}
\def\im{\hbox{\rm im}}
\def\im{\hbox{\rm im}}
\def\mod{\hbox{\rm mod}}
\def\tensor{\otimes}
\begin{document}

\title{On free products and amalgams of pomonoids}
\author{Bana Al Subaiei and James Renshaw\\\small Department of Mathematics\\
\small University of Southampton\\
\small Southampton, SO17 1BJ\\
\small England\\
\small Email: banauk@hotmail.com\\
\small j.h.renshaw@maths.soton.ac.uk}
\date{December 2013}
\maketitle

\begin{abstract}
The study of amalgamation in the category of partially ordered monoids was initiated by Fakhuruddin in the 1980s. In  1986 he proved that, in the category of commutative pomonoids, every absolutely flat commutative pomonoid is a weak amalgmation base and every commutative pogroup is a strong amalgamation base. Some twenty years later, Bulman-Fleming  and Sohail in 2011 extended this work to what they referred to as pomonoid amalgams. In particular they proved that pogroups are poamalgmation bases in the category of pomonoids. Sohail, also in 2011, proved that absolutely poflat commutative pomonoids are poamalgmation bases in the category of commutative pomonoids.
In the present paper we extend the work on pomonoid amalgams by generalising the work of Renshaw on amalgams of monoids and extension properties of acts over monoids.
\end{abstract}

\medskip

{\bf Key Words} Semigroups, monoids, pomonoids, amalgamation, free products, unitary subpomonoids.

{\bf 2010 AMS Mathematics Subject Classification} 06F05, 20M30, 20M50.

\section{Introduction and Preliminaries}
For background material on semigroups, monoids and acts over monoids we refer the reader to \cite{howie-1995} and for that relating to $S-$posets we refer to \cite{bf-2005} and \cite{bf-2006}.
We start with a brief resum\'e of the category of $S-$posets, where $S$ is a pomonoid, and introduce some basic results on tensor products, direct systems and pushouts, many of which are needed for later sections. Section 2 introduces the important concept of a {\em free extension} and we show how the amalgamated free product of an amalgam of pomonoids can be viewed as a direct limit of posets constructed from these extensions. This work is very similar to the situation in the unordered case (see~\cite{renshaw-1986}) and therefore some of the technical details have been omitted for brevity as they follow a similar argument. However there are a number of differences in the ordered case and we have illuminated these where necessary. An ordered version of the extension property is introduced and it is shown that {\em weak amalgamation pairs} have this extension property. Section 3 concerns a number of ordered versions of the {\em unitary property}. Unitary submonoids were first shown to be strongly connected to amalgamation by Howie in his pioneering work~\cite{howie-1962}. We introduce the concept of {\em strongly pounitary} submonoids, which we feel is a more natural analogue of unitary submonoids, particularly with respect to amalgamation. We culminate this section with an ordered version of Howie's original result. Finally, in section 4, we briefly consider the situation for commutative pomonoids.

\bigskip

A monoid $S$ is said to be a {\em partially ordered monoid} or a {\em pomonoid} if $S$ is endowed with a partial order $\le$ which is compatible with the binary operation on $S$ in the following manner
$$
\forall s, t, u \in S, t \le u \Rightarrow st \le su  \text{ and }  ts \le us.
$$
A map $f$: $X \to Y$, where $X$ and $Y$ are posets, is said to be {\em monotone} if for all $x, y \in X, x \le y \Rightarrow f(x) \le f(y)$, whereas it is said to be an {\em order embedding}  if for all $x, y \in X, x \le y \Leftrightarrow f(x) \le f(y)$.  Clearly order embeddings are one-to-one and monotone. A map $f$ is said to be an {\em order isomorphism} if it is a surjective order embedding.
It is worth noting that in the category of posets, to demonstrate that a map $f:X\to Y$ is well defined, it suffices to show that it satisfies the {\em monotonic property} $f(x)\le f(y)$ whenever $x\le y$. We shall make frequent use of this without further reference.

If $S$ is a pomonoid and $A$ is a non empty poset, then $A$ is called a {\em right $S-$poset} if $A$ is a right $S-$act and the action is monotonic in each of the variables. That is to say

\begin{enumerate}
\item $a1$ = $a$  and  $a(st)$ = $(as)t$  for all $s,t\in S,a\in A$;
\item if $a\le b\in A, s\in S$ then $as\le bs$;
\item if $a\in A, s\le t\in S$ then $as\le at$.
\end{enumerate}
{\em Left $S-$posets} are defined dually. If $A$ is both a left $S-$poset and a right $T-$poset for pomonoids $S$ and $T$, and in addition $(sa)t=s(at)$ for all $s\in S, a\in A, t\in T$ then we call $A$ an {\em $(S,T)-$poset}. If $A$ and $B$ are $S-$posets then the map $f: A \to B$ is said to be an {\em $S-$poset morphism} when $f$ is both monotonic and a morphism of $S-$acts. In the category of $S-$posets the monomorphisms are exactly the one-to-one $S-$poset morphisms and the epimorphisms are exactly the onto $S-$poset morphisms \cite{bf-2006}. 

\smallskip

As in~\cite{bf-2006}, a {\em congruence} $\theta$ on an $S-$poset $A$ is an $S-$act congruence with the additional property that $A/\theta$ can be endowed with a suitable partial order so that $A/\theta$ is an $S-$poset and the canconical map $\theta^\natural: A\to A/\theta$ is an $S-$poset morphism.
It can be shown that an $S-$act congruence $\rho$ on an $S-$poset $A$ is an $S-$poset congruence if and only if 
 $a \; \rho \; b$ whenever $a \le_{\rho} b  \le_{\rho} a$
 where $a \le_{\rho} b$ is defined by
$$
a\le_\rho b\text{ if and only if there exist } n\ge1\text{ and }a_1,a_1',\ldots a_n, a_n'\in A,
$$
$$
 a \le a_{1}\; \rho\; a'_{1}\le a_{2}\; \rho\; a'_{2} \dots \le a_{n}\; \rho\; a'_{n} \le b.
$$
If $R$ is a binary relation on a right $S-$poset $A$ then the {\em right $S-$poset congruence $\nu(R)$ induced on $A$ by $R$} is defined as
$$
a \; \nu(R) \; b \text{ if and only if } a \le_{\alpha(R)} b \le_{\alpha(R)} a
$$
where the relation $\alpha(R)$ is defined by $a  \; \alpha(R)  \; b$ if and only if either $a=b$ or there exists $n\ge 1$ and $(x_i,x_i')\in R, s_i\in S$ for $i=1,\ldots, n$ such that
$$
a = x_{1}s_{1},  x'_{1}s_{1} = x_{2}s_{2}, \dots , x'_{n}s_{n} = b.
$$
The relation $\alpha(R)$ is reflexive and transitive and also preserves the action of $S$. The order relation on $A/\nu(R)$ is defined as $ a \nu(R) \le b \nu(R)$ if and only if $a \le_{\alpha(R)} b.$ 
It is easy to show that for any  binary relation $R$ on $A$, if $(x, x') \in R$, then $(x, x') \in \alpha(R)$, and thus $x \le_{\alpha(R)} x'$.

The right $S-$poset congruence $\theta(R)=\nu(R\cup R^{-1})$ is the {\it congruence generated}  (in the usual sense) on $A$ by $R$.

\bigskip

In the 1980s, Fakhruddin (\cite{fakhruddin-1986}) initiated the study of pomonoid amalgams, which has recently been extended by Bulman-Fleming and Sohail (\cite{bf-2010}).
A {\em pomonoid amalgam} ${\cal A} = [U; S_{i}; \varphi_{i}]$ consists of a pomonoid $U$, called the {\em core}, a family   $\lbrace S_{i}: i\in I\rbrace$ of pomonoids, and a family $\lbrace\varphi_{i}: i\in I\rbrace$ of pomonoid order embeddings, $\varphi_{i}: U\to S_{i}$. Pomonoid order embeddings are monoid homomorphisms that are order-embeddings of the underlying posets. The amalgam is said to be {\em weakly embeddable} (resp. {\em weakly poembeddable}) in a pomonoid $W$ if there exist monomorphisms (resp. order embeddings) $\theta_{i}: S_{i}\to W$ such that  $\theta_{i}\varphi_{i} =  \theta_{j}\varphi_{j}$  for all $i \neq j$ in $I$. If in addition $\theta_{i}(S_{i}) \cap \theta_{j}(S_{j}) =  \theta_{i}\varphi_{i}(U)$ then we say that the amalgam is {\em strongly embeddable} (resp. {\em strongly poembeddable}) in $W$. It is worth noting that a pomonoid monomorphism is just  a one-to-one monotone monoid homomorphism.

\medskip

We shall see later that the embeddability of amalgams of pomonoids is closely connected with {\em tensor products} of certain $S-$posets. Let $S$ be a pomonoid, let $A$ be a right $S-$poset and let $B$ be a left $S-$poset. The cartesian product $A\times B$ can be endowed with a partial order defined componentwise. Then the {\em tensor product of $A$ and $B$ over $S$} is the poset given by $A\otimes_S B = (A\times B)/\tau$ where $\tau$ is the order-congruence on $A\times B$ (considered as an $S-$poset with trivial $S-$action) generated by
$$
H=\{((as,b),(a,sb))| a\in A, b\in B, s\in S\}.
$$
When appropriate we normally denote the tensor product by $A\otimes B$ and the congruence class $(a,b)\tau = a\otimes b$.

The order on the  tensor product $A \otimes B$ is given as follows (see~\cite[Theorem 5.2]{shi-2005} for more details).
For $a,a'\in A, b,b'\in B$, $a \otimes b \le a' \otimes b'$ if and only if there exists $n\ge 1$ and $a_1, \dots, a_n \in A, b_2, \dots, b_n \in B$, and $s_1, \dots, s_n, t_1, \dots, t_n \in S$ such that
\begin{align*}
a& \le a_{1}s_{1} &   s_{1} b&  \le t_{1} b_{2}\\
a_{1}t_{1} & \le a_{2}s_{2} &  s_{2}b_{2} &  \le t_{2} b_{3}\\
\vdots &  & \vdots\\
a_{n-1}t_{n-1} & \le a_ns_n & s_{n}b_{n}&  \le t_{n} b'\\
a_{n}t_{n} & \le a'.
\end{align*}

If $\lambda$: $A \to B$ is a left $S-$poset morphism and if $Y$ is a right $S-$poset such that $y \otimes a\le y'\otimes a'$ in $Y\otimes A$ then it is easy to see that $y\otimes\lambda(a)\le y'\otimes\lambda(a')$ in $Y\otimes B$. In addition $y \otimes s \le y' \otimes s'$ in $Y \otimes_S S$ if and only if  $ys \le y's'$ (see~\cite[Corollary 3.3]{shi-2008}). 

We can also define tensor products of $S-$posets in terms of {\em balanced maps} in the usual way (see~\cite[Theorem 5.3]{shi-2005} and~\cite[Proposition 8.1.10]{howie-1995}) and so we can easily deduce,

\begin{lemma}\label{associative-lemma}{{\rm [Cf.~\cite[Proposition 8.1.11]{howie-1995}]}}
Let $S$ and $T$ be pomonoids and let $A$ be a right $S-$poset, $B$ an $(S,T)-$poset and $C$ and left $T-$poset. Then $(A\otimes_SB)\otimes_TC$ is order isomorphic to $A\otimes_S(B\otimes_TC)$.
\end{lemma}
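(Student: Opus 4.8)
The plan is to exploit the universal property of the tensor product expressed in terms of balanced maps, which the paper has just recalled is available in the usual way. Recall that a map $f:A\times B\to P$ into a poset $P$ is $S$-balanced if $f(as,b)=f(a,sb)$ for all $a,b,s$, and — working in the category of posets — we call it monotone if it is monotone in each variable separately, equivalently jointly monotone for the componentwise order on $A\times B$. The fact I would record first is that every monotone $S$-balanced map $f:A\times B\to P$ factors uniquely as $f=\overline f\circ\otimes$ through a monotone map $\overline f:A\otimes_S B\to P$ with $\overline f(a\otimes b)=f(a,b)$. Existence and uniqueness on underlying sets is the usual balanced-map property; the extra point is that $\overline f$ is monotone, and this follows from the explicit order description given above, since applying the $S$-balanced monotone map $f$ to each of the defining inequalities $a\le a_1s_1$, $s_1b\le t_1b_2,\ldots$ of a witnessing chain telescopes to yield $f(a,b)\le f(a',b')$ whenever $a\otimes b\le a'\otimes b'$.

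With this monotone universal property in hand I would construct the two candidate maps. Fixing $c\in C$, the assignment $(a,b)\mapsto a\otimes(b\otimes c)$ is $S$-balanced — here $a\otimes(b\otimes c)$ uses the left $S$-action on $B\otimes_T C$ coming from the $(S,T)$-structure of $B$, so that $(as)\otimes(b\otimes c)=a\otimes\bigl((sb)\otimes c\bigr)$ — and monotone, hence induces a monotone map $g_c:A\otimes_S B\to A\otimes_S(B\otimes_T C)$ with $g_c(a\otimes b)=a\otimes(b\otimes c)$. The assignment $(z,c)\mapsto g_c(z)$ is then $T$-balanced, since $g_c\bigl((a\otimes b)t\bigr)=g_c(a\otimes bt)=a\otimes(bt\otimes c)=a\otimes(b\otimes tc)=g_{tc}(a\otimes b)$ using the right $T$-action $(a\otimes b)t=a\otimes bt$ inherited from $B$, and it is monotone in each variable; applying the universal property once more yields a monotone map
$$
\Phi:(A\otimes_S B)\otimes_T C\to A\otimes_S(B\otimes_T C),\qquad \Phi\bigl((a\otimes b)\otimes c\bigr)=a\otimes(b\otimes c).
$$
By the entirely symmetric construction I obtain a monotone map $\Psi$ in the reverse direction with $\Psi\bigl(a\otimes(b\otimes c)\bigr)=(a\otimes b)\otimes c$.

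Finally I would observe that $\Phi$ and $\Psi$ restrict to the identity on the generating elements $(a\otimes b)\otimes c$ and $a\otimes(b\otimes c)$ respectively, and since these generate the two posets, $\Phi$ and $\Psi$ are mutually inverse bijections. As both $\Phi$ and its inverse $\Psi$ are monotone, we obtain $x\le y\iff\Phi(x)\le\Phi(y)$, so $\Phi$ is an order embedding, and being onto it is the required order isomorphism. The genuinely new point compared with the monoid and act case (Howie, Proposition 8.1.11), where associativity is just a bijection of sets, is precisely the passage from ``well defined'' to ``monotone'' for the induced maps; this is where the explicit order description is essential, and I expect it to be the only real obstacle, the remaining balancing identities being routine consequences of the $(S,T)$-biposet structure of $B$.
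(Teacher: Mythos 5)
Your proof is correct and follows exactly the route the paper intends: it invokes the balanced-map universal property of the tensor product of $S$-posets (which the paper cites just before the lemma and from which it says the result is ``easily deduced''), and you have correctly identified and filled in the one genuinely order-theoretic point, namely that the induced maps are monotone. No gaps; this is the paper's argument made explicit.
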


In addition

\begin{lemma}\label{order-lemma}
Suppose that $R,S,T$ are pomonoids, $X$ is an $(R,S)-$poset and $Y$ an $(S,T)-$poset. Then
\begin{enumerate}
\item if $x\le x'$ in $X$ and $y\le y'$ in $Y$ then $x\otimes y\le x'\otimes y'$ in $X\otimes_SY$,
\item if $x\otimes y\le x'\otimes y'$ in $X\otimes_SY$ and $t\le t'\in T,r\le r'\in R$ then $rx\otimes yt\le r'x'\otimes y't'$ in $X\otimes_SY$.
\end{enumerate}
\end{lemma}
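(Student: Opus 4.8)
The plan for part (1) is to exhibit a witnessing chain of length $n=1$ in the explicit description of the order recalled above. Taking $a_1=x$ and $s_1=t_1=1$, the three required inequalities read $x\le x1=x$, $1y=y\le y'=1y'$ and $x1=x\le x'$, which hold by reflexivity together with the hypotheses $y\le y'$ and $x\le x'$; hence $x\otimes y\le x'\otimes y'$. More conceptually, since $X\otimes_SY$ is by definition the quotient $(X\times Y)/\tau$ of a poset by a poset congruence, the canonical map $q\colon X\times Y\to X\otimes_SY$, $(x,y)\mapsto x\otimes y$, is monotone for the componentwise order on $X\times Y$; as $x\le x'$ and $y\le y'$ give $(x,y)\le(x',y')$, the claim follows at once.

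For part (2) I would start from a chain witnessing $x\otimes y\le x'\otimes y'$ and transform it by multiplying on the outside. Relabel the right-hand $S$-multipliers $t_i$ of the order description as $u_i$, to avoid a clash with $t\in T$. I would then left-multiply every relation in the left column by $r$ and right-multiply every relation in the right column by $t$. Using the mixed-associativity axioms $(rx)s=r(xs)$ for the $(R,S)$-poset $X$ and $(sy)t=s(yt)$ for the $(S,T)$-poset $Y$, together with compatibility of the order with the actions, this yields a chain of the same length whose left entries are $ra_i$ and whose right intermediate entries are $b_it$, with the $s_i$ and $u_i$ unchanged. It remains to install the hypotheses $r\le r'$ and $t\le t'$, which I would do at the two closing inequalities of the chain: from $a_nu_n\le x'$ one obtains $(ra_n)u_n\le rx'\le r'x'$, and from $s_nb_n\le u_ny'$ one obtains $s_n(b_nt)\le u_n(y't)\le u_n(y't')$. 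The transformed chain then witnesses exactly $rx\otimes yt\le r'x'\otimes y't'$.

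Conceptually, part (2) records that $X\otimes_SY$ is an $(R,T)$-poset under $r(x\otimes y)t=rx\otimes yt$ whose actions are monotone in every variable: from $x\otimes y\le x'\otimes y'$ one gets $rx\otimes y\le rx'\otimes y'\le r'x'\otimes y'$ by monotonicity first in the element and then in the $R$-variable, and a subsequent right multiplication gives $rx\otimes yt\le r'x'\otimes y't\le r'x'\otimes y't'$. I expect the only delicate point to be bookkeeping rather than mathematics: one must apply $r\le r'$ and $t\le t'$ precisely at the final inequality of each column, and must invoke the commuting-actions axioms to slide the outer multipliers past the inner $S$-multipliers. Verifying that these outer multiplications genuinely send a valid chain to a valid chain — which is exactly where the $(R,S)$- and $(S,T)$-poset axioms are used — is the step I would check most carefully.
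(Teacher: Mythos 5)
Your proof is correct. For part (1) you do essentially what the paper does: both arguments exhibit a length-one witnessing scheme built from the identity of $S$ (the paper places the hypothesis $x\le x'$ in the opening slot, $x\le x'1$, and closes with the trivial $x'1\le x'$; you place it in the closing slot and open trivially — the difference is cosmetic). For part (2), however, your route is genuinely different from the paper's. You transform the witnessing scheme for $x\otimes y\le x'\otimes y'$ by hand: left-multiply the left column by $r$, right-multiply the right column by $t$, use the mixed-associativity axioms $(ra)s=r(as)$ in $X$ and $(sb)t=s(bt)$ in $Y$ to keep the scheme well-formed, and install $r\le r'$ and $t\le t'$ at the two closing inequalities. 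This checks out line by line: the new scheme has entries $ra_i\in X$ and $b_it\in Y$ with the $S$-multipliers unchanged, and it witnesses exactly $rx\otimes yt\le r'x'\otimes y't'$. The paper instead argues abstractly: from part (1) it gets $r\otimes(x\otimes y)\otimes t\le r'\otimes(x'\otimes y')\otimes t'$ in $R\otimes_R(X\otimes_SY)\otimes_TT$, and then applies associativity of the tensor product (Lemma~\ref{associative-lemma}) together with the order isomorphisms $R\otimes_RZ\cong Z\cong Z\otimes_TT$ coming from the observation that $y\otimes s\le y'\otimes s'$ in $Y\otimes_SS$ iff $ys\le y's'$. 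Your version is more elementary and self-contained, needing nothing beyond the explicit description of the order on a tensor product, and it makes visible exactly where the $(R,S)$- and $(S,T)$-poset axioms enter; the paper's version is shorter because it reuses machinery already set up. Both arguments, in passing, establish that $X\otimes_SY$ is an $(R,T)$-poset with actions monotone in every variable, which is the conceptual content of the lemma.
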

\begin{proof} The first part follows easily from the following scheme
\begin{align*}
x& \le x'1 &   1y&  \le 1y'\\
x'1& \le x'.
\end{align*}
For the second part it follows that $r\otimes(x\otimes y)\otimes t\le r'\otimes(x'\otimes y')\otimes t$ in $R\otimes_R(X\otimes_SY)\otimes_TT$ and so from the observation before Lemma~\ref{associative-lemma}, and its dual, we deduce that $rx\otimes yt\le r'x'\otimes y't'$ in $X\otimes_SY$.
\end{proof}

\bigskip

Let $I$ be a quasi-ordered set. A {\em direct system} of right $S-$posets $(X_{i}, \varphi^{i}_{j})_{i\in I}$ is a collection of right $S-$posets $X_{i}$ and a collection of $S-$poset morphism $\varphi^{i}_{j}$: $X_{i} \to X_{j}$, $i \le j$, which satisfies
\begin{enumerate}
\item $\varphi^{i}_{i}$ = $1_{X_{i}}$, and 
\item $\varphi^{j}_{k} \circ \varphi^{i}_{j}$ = $\varphi^{i}_{k}$ whenever $i \le j \le k$.
\end{enumerate}
The {\em direct limit} of this direct system is an $S-$poset $X$ and $S-$poset morphisms $\varphi_{i}$: $X_{i} \to X$ which  satisfy
\begin{enumerate}
\item $\varphi_{j} \circ \varphi^{i}_{j}$ = $\varphi_{i}$ whenever $i \le j$,
\item if $Y$ is an $S-$poset and $\psi_{i}$: $X_{i} \to Y$ are $S-$poset morphisms such that $\psi_{j} \circ \varphi^{i}_{j}$ = $\psi_{i}$ whenever $i \le j$, then there exists a unique $S-$poset morphism $\psi$: $X \to Y$ such that  $\psi \circ \varphi_{i}$ = $\psi_{i}$ for all $i\in I$.
\end{enumerate}
It is straightforward to demonstrate that the direct limit of any direct system of $S-$posets exists and is unique up to isomorphism (see for example~\cite{bf-2005}).

\smallskip

The set $I$ is called {\em directed} if for all $i, j \in I$, there exists $k \in I$ such that $k \ge i, j$. The first part of the following lemma appears in \cite{bf-2005} as Proposition 2.5, the other parts are straightforward.

\begin{lemma} \label{direct-limit-lemma}
Let $(X_{i}, \varphi_{j}^{i})$ be a direct system in the category of $(S, T)-$posets with directed index set and let $(X, \alpha_{i})$ be the direct limit of this system. Then
\begin{enumerate}
\item $\varphi_{i}(x_{i}) \le \varphi_{j}(x_{j})$ in $X$ if and only if there exists $k  \ge i, j$ such that $\varphi_{k}^{i}(x_{i}) \le \varphi_{k}^{j}(x_{j})$;
\item the map $\varphi_{i}$ is one-to-one if and only if $\varphi_{k}^{i}$ is one to one for all $k\ge i$;
\item the map $\varphi_{i}$ is order embedding if and only if  $\varphi_{k}^{i}$ is an order embedding for all $k\ge i$.
\end{enumerate}
\end{lemma}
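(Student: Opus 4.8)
The plan is to fix, throughout, the standard concrete model of the direct limit as a quotient of the disjoint union $\coprod_{i\in I}X_i$: two elements $x_i\in X_i$ and $x_j\in X_j$ are identified precisely when $\varphi_k^i(x_i)=\varphi_k^j(x_j)$ for some $k\ge i,j$, the maps $\alpha_i=\varphi_i$ send $x_i$ to its class, and the order is the one described in part (1). Checking that this object, with this order, is an $(S,T)$-poset satisfying the required universal property is routine once directedness is in hand, and I would treat part (1) itself as the foundational statement, invoking \cite[Proposition 2.5]{bf-2005}. The forward implication of (1) records how the defining order on the limit is transported back into some $X_k$, while its converse is immediate from monotonicity of $\varphi_k=\alpha_k$ together with the compatibility $\varphi_k\circ\varphi_k^i=\varphi_i$.

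For part (2) the key tool is the identity $\varphi_k\circ\varphi_k^i=\varphi_i$ for $k\ge i$. In the $(\Rightarrow)$ direction, if $\varphi_i$ is one-to-one and $\varphi_k^i(x_i)=\varphi_k^i(y_i)$, then applying $\varphi_k$ yields $\varphi_i(x_i)=\varphi_i(y_i)$, hence $x_i=y_i$. For the harder $(\Leftarrow)$ direction I would argue from part (1): assuming each $\varphi_k^i$ is injective, if $\varphi_i(x_i)=\varphi_i(y_i)$ then both $\varphi_i(x_i)\le\varphi_i(y_i)$ and $\varphi_i(y_i)\le\varphi_i(x_i)$ hold, so (1) produces $k,k'\ge i$ with $\varphi_k^i(x_i)\le\varphi_k^i(y_i)$ and $\varphi_{k'}^i(y_i)\le\varphi_{k'}^i(x_i)$. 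Choosing, by directedness, some $l\ge k,k'$ and pushing both inequalities forward along the monotone morphisms $\varphi_l^k,\varphi_l^{k'}$ gives $\varphi_l^i(x_i)\le\varphi_l^i(y_i)\le\varphi_l^i(x_i)$; antisymmetry in $X_l$ forces $\varphi_l^i(x_i)=\varphi_l^i(y_i)$, and injectivity of $\varphi_l^i$ (valid since $l\ge i$) then gives $x_i=y_i$.

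Part (3) follows the same pattern but is cleaner, since an order embedding is a monotone map that also reflects $\le$. For $(\Rightarrow)$, if $\varphi_i$ is an order embedding and $\varphi_k^i(x_i)\le\varphi_k^i(y_i)$, then applying the monotone map $\varphi_k$ gives $\varphi_i(x_i)\le\varphi_i(y_i)$, whence $x_i\le y_i$; as $\varphi_k^i$ is a morphism it is already monotone, so it is an order embedding. For $(\Leftarrow)$, suppose every $\varphi_k^i$ with $k\ge i$ is an order embedding and $\varphi_i(x_i)\le\varphi_i(y_i)$; part (1) supplies $k\ge i$ with $\varphi_k^i(x_i)\le\varphi_k^i(y_i)$, and reflecting through the embedding $\varphi_k^i$ gives $x_i\le y_i$. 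Thus $\varphi_i$ reflects order and, being monotone, is an order embedding.

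I expect the only genuine obstacle to lie in part (1): both the well-definedness of the order on the limit and its characterisation rely essentially on directedness, which is what produces the common upper bounds needed for transitivity and for compatibility of the relation with the action. Since this is precisely the content of \cite[Proposition 2.5]{bf-2005}, I would simply cite it. Parts (2) and (3) are then formal consequences of (1), the compatibilities $\varphi_k\circ\varphi_k^i=\varphi_i$, antisymmetry in each $X_l$, and directedness, with no further difficulty.
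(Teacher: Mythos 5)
Your proposal is correct and follows exactly the route the paper takes: the paper simply cites \cite[Proposition 2.5]{bf-2005} for part (1) and declares parts (2) and (3) straightforward, and your derivations of (2) and (3) from (1), the compatibility $\varphi_k\circ\varphi_k^i=\varphi_i$, directedness and antisymmetry are precisely the intended ``straightforward'' arguments.
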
 

As a special case, the direct limit in the category of $S-$posets of the diagram 

$$
\begin{diagram}
\node{A}  \arrow{e,t}{f} \arrow{s,t}{g} \node{B}  \\
\node {C}  
\end{diagram}
$$
is called the {\em pushout} of the diagram. It is easy to show that it is isomorphic to the quotient of the coproduct $F = B \dot\cup C$ by the $S-$poset congruence $\rho$ generated by
$$
R = \lbrace (f(a), g(a)) : a \in A \rbrace.
$$
Recall (see \cite{bf-2006}) that the coproduct of $B$ and $C$ is their disjoint union with component-wise order.
The associated maps $\gamma : B \to F/\rho$ and $\delta : C \to F/\rho$ are given by $\gamma(b)= b\rho$ and $\delta(c) = c\rho$ respectively. As in the category of $S-$acts, it is easy to demonstrate that tensor products preserve pushouts.

\begin{lemma}  \label{bad1}
Let $$
\begin{diagram}
\node{A}  \arrow{e,t}{f} \arrow{s,t}{g} \node{B} \arrow{s,b}{\gamma}  \\
\node {C}  \arrow{e,b}{\delta}  \node {D}
\end{diagram}
$$
be a pushout in the category of $S-$posets. If $\gamma(b) \le \delta(c)$, where $b \in B$ and $c \in C$, then there exists $a, a' \in A$ such that $b \le  f(a)$ and $g(a') \le c$.  
\end{lemma}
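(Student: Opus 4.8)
The plan is to unravel the order on the pushout $D=F/\rho$ and keep track of which summand of $F=B\,\dot\cup\,C$ each term of the witnessing zigzag lies in. Recall that $\rho=\theta(R)=\nu(R\cup R^{-1})$ with $R=\{(f(a),g(a)):a\in A\}$, so that $\gamma(b)=b\rho\le c\rho=\delta(c)$ means precisely $b\le_{\alpha(R\cup R^{-1})}c$ in $F$. Writing this out, there are $p_1,p_1',\dots,p_n,p_n'\in F$ with
$$b\le p_1\;\alpha(R\cup R^{-1})\;p_1'\le p_2\;\cdots\le p_n\;\alpha(R\cup R^{-1})\;p_n'\le c,$$
where each relation $p_i\;\alpha(R\cup R^{-1})\;p_i'$ is either an equality or expands into a chain $p_i=y_1u_1,\ y_1'u_1=y_2u_2,\dots,y_m'u_m=p_i'$ with $(y_j,y_j')\in R\cup R^{-1}$ and $u_j\in S$.

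The heart of the argument is a bookkeeping observation about these chains. Since the order on the coproduct $F$ is component-wise, every $\le$-step and every junction equality stays inside one summand, whereas each elementary step $y_ju_j\to y_j'u_j$ crosses between summands: because $f$ and $g$ are $S$-morphisms, a pair $(f(a),g(a))$ produces a step $f(au_j)\to g(au_j)$ from $B$ to $C$, and a pair $(g(a),f(a))$ produces a step $g(au_j)\to f(au_j)$ from $C$ to $B$. As $b$ lies in $B$ and $c$ lies in $C$, the fully expanded chain from $b$ to $c$ must contain at least one such crossing step.

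I would then read off the two conclusions from the first and the last crossing steps. The input of the first crossing is joined to $b$ by component-preserving moves only, so it lies in $B$; hence this step has the form $f(a_0u)\to g(a_0u)$ and, composing the intervening $\le$-steps, $b\le f(a_0u)$ in $B$. Setting $a:=a_0u\in A$ (and using $f(a_0)u=f(a_0u)$) gives $b\le f(a)$. Dually, the output of the last crossing is joined to $c$ by component-preserving moves only, so it lies in $C$; hence this step has the form $f(a_0'v)\to g(a_0'v)$ and $g(a_0'v)\le c$ in $C$, so $a':=a_0'v$ yields $g(a')\le c$.

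The point that needs the most care — and the only real obstacle — is the assertion that the first crossing step runs from $B$ to $C$ and the last one ends in $C$. This rests on the fact that no term before the first crossing can lie in $C$, since reaching $C$ from $b\in B$ would itself require an earlier crossing; the same reasoning applies in reverse at the end. Once the component-tracking is set up, verifying this (including the case of a single $\alpha$-relation that strings several generating pairs together, where the junction equalities force the crossings to alternate $B\to C\to B\to\cdots$) is routine.
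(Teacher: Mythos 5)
Your proof is correct and takes essentially the same route as the paper: both unravel the relation $\le_{\alpha(R\cup R^{-1})}$ on the coproduct $B\,\dot\cup\,C$ and exploit the fact that the generating pairs $(f(a),g(a))$ are the only moves that cross between the order-wise disjoint summands, forcing the element adjacent to $b$ to lie in $f(A)$ and the one adjacent to $c$ to lie in $g(A)$. The paper organises the bookkeeping via a minimal-length chain instead of your first/last-crossing argument, but the substance is identical.
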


\begin{proof}
Suppose $\gamma(b) \le \delta(c)$. Then, since $D =  (B \dot\cup C) / \rho$ where $\rho$ is the $S-$poset congruence generated by
$$
R = \lbrace (f(a), g(a)) : a \in A \rbrace,
$$
it follows that $b\rho \le c\rho$ in $D$ and so $b \le_{\alpha(R \cup R^{-1})} c$. Now $b \le_{\alpha(R \cup R^{-1})} c$ if and only if there exists $d_1,d'_1,\ldots,d_n,d'_n \in B\dot\cup C$ such that
\begin{equation*}
b \le d_1  \;\alpha(R \cup R^{-1}) \; d'_1 \le d_2 \; \alpha(R \cup R^{-1}) \; d'_2 \le d_3 \dots d_n \;\alpha(R \cup R^{-1}) \; d'_n \le c.\tag{$\ast$}
\end{equation*}
Since $b\le c$ is impossible, we can assume that we have a system such as ($\ast$) of minimal length $n\ge 1$ such that $d_1\ne d'_n$.
Since  $b \le d_1$ and $d'_n \le c$, $d_1 \in B$ and $d'_n \in C$ by definition of the order on $B\dot\cup C$. For $ d_i  \;\alpha(R \cup R^{-1}) \; d'_i$, $1\le i\le n$, minimality of $n$ allows us to deduce that there exists $d_{i_1}, d'_{i_1},\ldots,d_{i_m},d'_{i_m}$ such that
$$
d_{i} = d_{i_1}s_{i_1},\;  d'_{i_1}s_{i_1} = d_{i_2}s_{i_2},\; \dots ,\; d'_{i_{m}}s_{i_{m}} = d'_{i},\text{ where }(d_{i_j}, d'_{i_j}) \in R \cup R^{-1}.
$$
Consequently we deduce that $d_1=f(a)$ for some $a\in A$. Similarly $d'_n=g(a')$ for some $a'\in A$ and so the result follows.
\end{proof}

\medskip

A subposet $X$ of a poset $P$ is called {\em convex}, if for any $x, y\in X, z\in P$ with $x\le z\le y$, $z\in X$. If $f : X \to Y$ is an $S-$poset morphism then we shall say that {\em $f$ is convex} if $\im(f)$ is convex in $Y$.

\begin{lemma} \label{bad}
Let $$
\begin{diagram}
\node{A}  \arrow{e,t}{f} \arrow{s,t}{g} \node{B} \arrow{s,b}{\gamma}  \\
\node {C}  \arrow{e,b}{\delta}  \node {D}
\end{diagram}
$$
be a pushout in the category of $S-$posets.
\begin{enumerate}
\item If  $\gamma(b) = \delta(c)$, $b\in B, c\in C$ then there exists $a_1, a'_1, a_2, a'_2 \in A$ such that $f(a'_1)\le b \le  f(a_1)$ and $ g(a'_2) \le c\le g(a_2)$;
\item if $f$ and $g$ are convex and if $\gamma(b) = \delta(c)$, $b\in B, c\in C$ then there exists $a, a' \in A$ such that $b = f(a) $ and $ c = g(a')$;
\item if $f$ is an order embedding then $\delta$ is also an order embedding;
\item if  $f$ and $g$ are convex and order embeddings and if $\gamma(b) = \delta(c)$ with $b \in B, c \in C$, then there exists a unique $a \in A$ such that $b = f(a) $ and $ c = g(a)$; 
\item if $f$ is convex then $\delta$ is convex.
\end{enumerate}
\end{lemma}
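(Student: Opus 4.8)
I would organize the five parts by their logical dependencies: parts (1), (2) and (5) follow from Lemma~\ref{bad1} together with convexity and the defining pushout identity $\gamma f=\delta g$, part (3) requires a direct analysis of the generating chains, and part (4) then drops out of (2) and (3). For part (1), the pushout is symmetric in its two legs, so $\gamma(b)=\delta(c)$ gives both $\gamma(b)\le\delta(c)$ and $\delta(c)\le\gamma(b)$. Applying Lemma~\ref{bad1} to the first yields $a_1,a'_2$ with $b\le f(a_1)$ and $g(a'_2)\le c$; applying the symmetric form of Lemma~\ref{bad1} (swapping the roles of $(B,f)$ and $(C,g)$) to the second yields $a'_1,a_2$ with $f(a'_1)\le b$ and $c\le g(a_2)$. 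Combining gives the four required inequalities. Part (2) is then immediate: convexity of $f$ applied to $f(a'_1)\le b\le f(a_1)$ forces $b\in\im(f)$, and likewise $c\in\im(g)$.

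For part (5), I would suppose $\delta(c)\le x\le\delta(c')$ in $D$; the case $x=\delta(c'')$ is trivial, so assume $x=\gamma(b)$. The symmetric form of Lemma~\ref{bad1} applied to $\delta(c)\le\gamma(b)$ gives $a'$ with $f(a')\le b$, and Lemma~\ref{bad1} applied to $\gamma(b)\le\delta(c')$ gives $a''$ with $b\le f(a'')$. Convexity of $f$ then yields $b=f(a_0)$ for some $a_0\in A$, so $x=\gamma(f(a_0))=\delta(g(a_0))\in\im(\delta)$ using $\gamma f=\delta g$.

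The crux is part (3). Since $\delta$ is monotone it suffices to show it reflects order, i.e.\ $\delta(c)\le\delta(c')$ implies $c\le c'$ (injectivity is then automatic by antisymmetry in $C$). Unwinding the pushout, $\delta(c)\le\delta(c')$ means $c\le_{\alpha(R\cup R^{-1})}c'$, witnessed by a chain $c\le d_1\;\alpha(R\cup R^{-1})\;d'_1\le d_2\;\alpha(R\cup R^{-1})\;d'_2\le\cdots\le d_n\;\alpha(R\cup R^{-1})\;d'_n\le c'$ in the coproduct $B\,\dot\cup\,C$. I would exploit that in a coproduct $\le$ respects the two components, so the component can change only across an $\alpha(R\cup R^{-1})$-relation, and then only through the generating identities $f(a)s=g(a)s$. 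The key computation is that injectivity of $f$ forces the $C$-side values occurring inside any one such relation to coincide; consequently a crossing from $C$ into $B$ has the reduced form $g(u)\to f(u)$, a crossing from $B$ into $C$ the form $f(v)\to g(v)$, and a relation staying in $B$ the form $f(p)\to f(q)$ with $g(p)=g(q)$. Reading the chain as alternating maximal runs in $C$ and in $B$, each $B$-run travels from some $f(u)$ to some $f(v)$; here I would use that $f$ is an order embedding to turn each internal comparison $f(p)\le f(p')$ into $p\le p'$, hence $g(p)\le g(p')$, while the internal $\alpha$-relations contribute equalities $g(p)=g(q)$. Splicing these pushed-forward segments with the $C$-runs produces a chain from $c$ to $c'$ lying entirely in $C$, giving $c\le c'$. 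I expect the bookkeeping of this collapse — verifying the reduced forms of the relations and that each $B$-run really does push forward to a single $C$-inequality — to be the main obstacle; everything else is formal.

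Finally, part (4) combines (2) and (3). Part (2) gives $b=f(a)$ and $c=g(a')$ for some $a,a'\in A$. Since $\gamma f=\delta g$ and $\gamma(b)=\delta(c)$, we get $\delta(g(a))=\gamma(f(a))=\gamma(b)=\delta(c)$, and the order embedding $\delta$ supplied by (3) is injective, so $g(a)=c$. Thus the single element $a$ witnesses both $b=f(a)$ and $c=g(a)$, and its uniqueness is immediate from the injectivity of $f$.
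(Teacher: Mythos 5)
Your proposal is correct and follows essentially the same route as the paper: part (1) from Lemma~\ref{bad1} and its symmetric form, parts (2) and (5) from convexity combined with Lemma~\ref{bad1}, and part (4) from (2) and (3). The only difference is that you work out in detail the chain-collapsing argument for part (3) (using injectivity of $f$ to reduce the generating relations and the order-embedding property to push $B$-runs down to $C$), which the paper dismisses as ``straightforward''; your analysis is the intended one and is sound, the remaining bookkeeping you flag does go through.
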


\begin{proof} Part (1) follows easily from Lemma~\ref{bad1} and part (2) is straightforward. For part (3), suppose that $\delta(c)\le\delta(c')$ so that $c\rho\le c'\rho$. It follows that $c\le_{\alpha(R\cup R^{-1})}c'$ and so there exists $n\ge1, d_i,d'_i\in D$ and $1\le i\le n$ such that
$$
c\le d_1\alpha(R\cup R^{-1})d'_1\le d_2 \ldots d'_{n-1}\le d_n\alpha(R\cup R^{-1})d'_n\le c'.
$$
In addition, we can assume that this sequence is of minimal length. It follows that if for some $1\le i\le n$, $d_i=d'_i$ then $c\le c'$, so let us assume that for each $i$, $d_i\ne d'_i$. It is clear that $d_1\in C$ and that there exists $m\ge 1$ and $x_j,x'_j,s_j, 1\le j\le m$ such that
$$
d_1=x_1s_1, x'_1s_1=x_2s_2, \ldots, x'_ms_m=d'_1,
$$
where for each $j$, $(x_j,x'_j)\in R\cup R^{-1}$. Moreover we can assume that this sequence of equations is also of minimal length. Suppose that $m\ge 2$. Since $d_1\in C$, it follows that $x_1=g(a_1), x'_1=f(a_1), x_2=f(a_2), x'_2=g(a_2)$ for some $a_1,a_2\in A$. Since $f$ is an order embedding, $a_1s_1=a_2s_2$ and so $d_1=g(a_1)s_1 =x'_2s_2$. This contradicts the minimality of $m$ and so $m=1$. This means that $d_1=g(a_1)s_1, f(a_1)s_1=d'_1$. Using a similar argument we can deduce that $d_2=f(a'_1)s'_1, g(a'_1)s'_1=d'_2$ for some $a'_1\in A, s'_1\in S$. Since $d'_1\le d_2$ and $f$ is an order embedding, we deduce that $a_1s_1\le a'_1s'_1$ and so $d_1\le d'_2$. This contradicts the minimality of $n$ and so $c\le c'$ as required.

To see (4), it follows from (2) that there exists $a,a'\in A$ with $b=f(a), c=g(a')$. Hence
$$
\gamma f(a) = \gamma(b) = \delta(c)=\delta g(a') = \gamma f(a'),
$$
and so $a=a'$ by (3). Uniqueness of this $a\in A$ also follows from (3).

For (5), suppose that $f$ is convex and $\delta(c) \le d \le \delta(c')$ where $d \in D$. Then either $d=b_1 \rho$ or $d=c_1 \rho$ with $b_1 \in B$ and $c_1 \in C$.  In the latter case $d$ is in the image of $\delta$ and so  $\delta$ is a convex map.  Otherwise $d=b_1 \rho = \gamma(b_1)$ and so $\delta(c) \le \gamma(b_1)\le\delta(c')$. From Lemma \ref{bad1} there exist $a_1, a'_1, a_2, a'_2 \in A$ such that
\begin{center}
$c \le g(a_1) \text{  } \text{   }f(a'_1) \le b_1$\\
$b_1 \le f(a_2) \text{  } \text{   }g(a'_1) \le c'.$
\end{center}
Since $f$ is convex, $b_1 = f(a)$ for some $a \in A$. Hence $d=\gamma(b_1) =\gamma f(a) = \delta g(a)$ and so $\delta$ is convex.
\end{proof}

\section{Free extensions, free products and amalgamation}

Let $U$ be a subpomonoid of a pomonoid $S$ and let $Y$ be a right $U-$poset and $X$ be a right $S-$poset such that $f: X \to Y$ is right $U-$poset morphism. The {\em free $S-$extension of $X$ and $Y$} is a right $S-$poset $F=F(S;X,Y)$ together with a $U-$poset morphism $g: Y\to F$ such that
\begin{enumerate}
\item $h=gf:X\to F$ is an $S-$poset morphism;
\item whenever there is a right $S-$poset $Z$ and a right $U-$poset morphism $\alpha:Y\to Z$ where $\beta = \alpha f:X\to Z$ is a right $S-$poset morphism then there exists a unique right $S-$poset morphism $\psi: F\to Z$ such that
$\psi g = \alpha$ and $\psi h = \beta$.
\end{enumerate}

\begin{theorem} \label{b1}
Free $S-$extensions exist in the category of $S-$posets and are unique up to isomorphism.
\end{theorem}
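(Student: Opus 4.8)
The plan is to construct $F$ as a pushout of induced $S$-posets, exploiting the fact that tensoring up along the inclusion $U\hookrightarrow S$ turns a right $U$-poset into the free right $S$-poset it generates. Uniqueness I would dispose of first, as the usual formal consequence of the universal property: given two free $S$-extensions $(F,g,h)$ and $(F',g',h')$, applying the defining property of $F$ to the pair $(g',h')$ and of $F'$ to $(g,h)$ produces $S$-poset morphisms $F\to F'$ and $F'\to F$ compatible with the structure maps, and the uniqueness clauses force the two composites to be the respective identities. So the substance is existence.

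For existence I would first record the adjunction underlying induction. Since $S$ is a $(U,S)$-poset, the balanced-map description of the tensor product shows that for any right $U$-poset $Y$ the map $\eta_Y\colon Y\to Y\otimes_U S$, $y\mapsto y\otimes 1$, is a $U$-poset morphism and that every $U$-poset morphism $\alpha\colon Y\to Z$ into a right $S$-poset $Z$ factors uniquely as $\alpha=\bar\alpha\,\eta_Y$, where $\bar\alpha\colon Y\otimes_U S\to Z$, $\bar\alpha(y\otimes s)=\alpha(y)s$, is an $S$-poset morphism. Viewing the right $S$-poset $X$ as a right $U$-poset by restriction, the same functor gives the counit $\mu\colon X\otimes_U S\to X$, $\mu(x\otimes s)=xs$, an $S$-poset morphism, and the morphism $f\otimes 1_S\colon X\otimes_U S\to Y\otimes_U S$. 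I would then form, in the category of right $S$-posets, the pushout $(F,\gamma,\delta)$ of
$$
Y\otimes_U S \;\xleftarrow{\,f\otimes 1_S\,}\; X\otimes_U S \;\xrightarrow{\;\mu\;}\; X,
$$
with $\gamma\colon Y\otimes_U S\to F$ and $\delta\colon X\to F$; this exists as a special case of the direct-limit construction. I would define $g\colon Y\to F$ by $g=\gamma\,\eta_Y$, a $U$-poset morphism, and check that $h=gf=\delta$: indeed the pushout identifies $\gamma(f(x)\otimes s)$ with $\delta(xs)$, and taking $s=1$ gives $gf(x)=\gamma(f(x)\otimes 1)=\delta(x)$, so $h=\delta$ is automatically an $S$-poset morphism as required.

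It remains to verify the universal property, which is where the bookkeeping between the $U$-structure on $g$ and the $S$-structure on $h$ must be handled carefully. Given a right $S$-poset $Z$, a $U$-poset morphism $\alpha\colon Y\to Z$, and $\beta=\alpha f$ an $S$-poset morphism, I would pass to $\bar\alpha\colon Y\otimes_U S\to Z$ via the adjunction above and check the compatibility needed to invoke the pushout: both $\bar\alpha\,(f\otimes 1_S)$ and $\beta\,\mu$ send $x\otimes s$ to $\beta(x)s$ (using $\beta=\alpha f$ on one side and that $\beta$ is $S$-linear on the other). The pushout then yields a unique $S$-poset morphism $\psi\colon F\to Z$ with $\psi\gamma=\bar\alpha$ and $\psi\delta=\beta$, whence $\psi g=\alpha$ and $\psi h=\beta$; uniqueness of $\psi$ I would reduce to uniqueness in the pushout by showing any competitor $\psi'$ satisfies $\psi'\delta=\beta$ and, using $S$-linearity, $\psi'\gamma(y\otimes s)=\psi'(g(y))s=\alpha(y)s=\bar\alpha(y\otimes s)$. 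The main obstacle I anticipate is not any single hard estimate but getting the order-theoretic content right: one must confirm that $\bar\alpha$ and $\mu$ are genuinely \emph{monotone} (not merely $S$-act maps), which rests on the order description of the tensor product and on the observation, recorded before Lemma~\ref{associative-lemma}, governing inequalities in $Y\otimes_U S$; once these monotonicity checks go through, the pushout machinery delivers everything else.
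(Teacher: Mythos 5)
Your proof is correct, but it reaches the free extension by a different construction from the paper's. The paper builds $F$ directly as the quotient $(Y\otimes_U S)/\rho$, where $\rho=\nu(R)$ is the $S$-poset congruence induced by $R=\{(f(x)\otimes s,\,f(x')\otimes s'): xs\le x's'\}$, sets $g(y)=(y\otimes1)\rho$, and refers to the unordered case for the verification of the universal property; you instead realise $F$ as the pushout of $Y\otimes_U S\xleftarrow{f\otimes1}X\otimes_U S\xrightarrow{\mu}X$ and drive the whole verification through the restriction--induction adjunction ($\alpha\mapsto\bar\alpha$, counit $\mu$). In effect you prove Theorem~\ref{b1} by establishing what the paper records separately as Lemma~\ref{b2}, and since the two objects satisfy the same universal property this is legitimate; your compatibility check $\bar\alpha(f\otimes1)=\beta\mu$ and your reduction of uniqueness of $\psi$ to pushout uniqueness are both sound, and you correctly isolate the only genuinely order-theoretic point, namely that $\bar\alpha$ and $\mu$ are monotone, which follows from the balanced-map description of the tensor product cited before Lemma~\ref{associative-lemma}. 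What the adjunction route buys is a cleaner, essentially formal verification of the universal property; what the paper's explicit congruence buys is a concrete description of elements and of the order on $F$ --- in particular the identity $(y\otimes s)\rho=g(y)s$ and Lemma~\ref{free-extension-order-lemma}, which are used heavily in the inequality schemes of Theorem~\ref{w2} --- so if you adopt your construction you would still need to extract that explicit description (via the pushout-as-quotient of the coproduct) before the later sections can proceed.
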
 

\begin{proof} Suppose that  $X$ is a right $S-$poset, $Y$ is a right $U-$poset, and  $f: X\to Y$ is a right $U-$poset morphism. Notice that $Y\otimes S$ is a right $S-$poset with action given by $(y\otimes s)t = y\otimes st$. Suppose that $\rho = \nu(R)$ is the right $S-$poset congruence on $Y\otimes_{U}S$ induced by the relation:
$$
R=\lbrace (f(x)\otimes s, f(x')\otimes s'): xs\le x's'\rbrace.
$$
Let $g: Y \to (Y\otimes S)/\rho$ be given by $g(y) = (y\otimes 1)\rho$. Then, it is straightforward, as in the unordered case (see~\cite[Theorem 4.18]{renshaw-1986}), to show that $((Y\otimes S)/\rho,g)$ is the free $S-$extension of $X$ and $Y$ and that it is unique up to isomorphism.
\end{proof}
Notice that if $(y\tensor s)\rho \in (Y\tensor S)/\rho$ then $(y\tensor s)\rho=g(y)s$. From this we can easily deduce the following useful result.

\begin{lemma}\label{free-extension-order-lemma}
Let $U$ be a subpomonoid of a pomonoid $S$, $X$ be a right $S-$poset, $Y$ be a right $U-$poset, and  $f:X\to Y$ be a right $U-$poset morphism. Let $F=(Y\tensor_U S)/\rho$ be the free $S-$extension of $X$ and $Y$. If $y\le y'$ in $Y$ and $s\le s'$ in $S$ then $(y\tensor s)\rho\le (y'\tensor s')\rho$ in $F$.
\end{lemma}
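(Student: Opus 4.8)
The plan is to reduce everything to the observation stated immediately before the lemma, namely that $(y\otimes s)\rho = g(y)s$ for all $y\in Y$ and $s\in S$, where $g\colon Y\to F$ is the canonical $U$-poset morphism $g(y)=(y\otimes 1)\rho$. Rewriting the two elements in this form, the claim $(y\otimes s)\rho\le(y'\otimes s')\rho$ becomes $g(y)s\le g(y')s'$, and this I would establish purely from the monotonicity already built into the structures involved.

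First I would use that $g$ is a right $U$-poset morphism, hence monotone, so that $y\le y'$ in $Y$ yields $g(y)\le g(y')$ in $F$. Next, since $F=(Y\otimes_U S)/\rho$ is a right $S$-poset, its action is monotone in each variable: acting on the inequality $g(y)\le g(y')$ by $s$ gives $g(y)s\le g(y')s$, and monotonicity of the action in the $S$-variable together with $s\le s'$ gives $g(y')s\le g(y')s'$. Transitivity then yields $g(y)s\le g(y')s'$, which is exactly the desired inequality once we translate back via $(y\otimes s)\rho=g(y)s$ and $(y'\otimes s')\rho=g(y')s'$.

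There is really no hard step here; the argument is a short chain of monotonicity. The only points requiring care are that the identity $(y\otimes s)\rho=g(y)s$ is applied correctly (this is the remark following Theorem~\ref{b1}, using that the action on $Y\otimes S$ is $(y\otimes t)s=y\otimes ts$) and that both monotonicity axioms of a right $S$-poset, in the act variable and in the $S$-variable, are invoked.

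Alternatively, one can bypass $g$ altogether: by Lemma~\ref{order-lemma}(1), applied to the right $U$-poset $Y$ and the $(U,S)$-poset $S$, the hypotheses $y\le y'$ and $s\le s'$ already give $y\otimes s\le y'\otimes s'$ in $Y\otimes_U S$; and since the natural map $Y\otimes_U S\to(Y\otimes_U S)/\rho$ is an $S$-poset morphism, hence monotone, the inequality is preserved in $F$. Either route is immediate, which is why the statement is flagged as an easy consequence of the preceding observation.
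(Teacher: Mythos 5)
Your argument is correct and follows exactly the route the paper intends: the paper gives no explicit proof but points to the identity $(y\otimes s)\rho = g(y)s$ stated just after Theorem~\ref{b1}, and your first paragraph is precisely the deduction from that identity using monotonicity of $g$ and of the right $S$-action on $F$. Your alternative via Lemma~\ref{order-lemma}(1) and the monotone quotient map is also valid, so nothing further is needed.
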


From now on, unless specifically mentioned, all tensor products will be over $U$. As in the category of $S-$acts,  it is possible to define the free $S-$extension in terms of pushouts.

\begin{lemma} \label{b2}
Let $U$ be a subpomonoid of a pomonoid $S$, $X$ be a right $S-$poset, $Y$ be a right $U-$poset, and  $f:X\to Y$ be a right $U-$poset morphism. Then, the free $S-$extension $F$ of $X$ and $Y$ is the pushout in the category of right $S-$posets of the diagram
$$
\begin{diagram}
\node{X\otimes S}  \arrow{e,t}{f\otimes 1} \arrow{s,l}{\varphi} \node{Y\otimes S} \\
\node{X  } 
\end{diagram}
$$
where $\varphi :X\otimes S\to X$ is given by $\varphi(x\otimes s) = xs$.
\end{lemma}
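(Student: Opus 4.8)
The plan is to verify that the pushout object, equipped with suitable structure maps, satisfies the universal property of the free $S-$extension, and then to invoke the uniqueness part of Theorem~\ref{b1}. Write $P$ for the pushout of the given span. By the discussion preceding Lemma~\ref{bad1}, $P=\bigl((Y\otimes S)\,\dot\cup\,X\bigr)/\rho'$, where $\rho'$ is the $S-$poset congruence generated by $R'=\{(f(x)\otimes s,\,xs):x\in X,\ s\in S\}$, and it comes with $S-$poset morphisms $\gamma:Y\otimes S\to P$ and $\delta:X\to P$ satisfying $\gamma(f\otimes 1)=\delta\varphi$. Here $\varphi:X\otimes S\to X$ is the action map $x\otimes s\mapsto xs$; its well-definedness and monotonicity come from the fact that $(x,s)\mapsto xs$ is a monotone balanced bimap, and it is an $S-$poset morphism since $\varphi((x\otimes s)t)=x(st)=(xs)t=\varphi(x\otimes s)t$.

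First I would produce the structure maps for the free extension out of $P$. Define $g:Y\to P$ by $g(y)=\gamma(y\otimes 1)$; the map $y\mapsto y\otimes 1$ is a monotone $U-$morphism (monotonicity follows from the order description of the tensor product via the scheme $y\le y'\cdot 1\le y'$), so $g$ is a $U-$poset morphism. Setting $s=1$ in $\gamma(f\otimes 1)=\delta\varphi$ gives $\gamma(f(x)\otimes 1)=\delta(x)$, that is $h:=gf=\delta$, which is an $S-$poset morphism. Thus $(P,g)$ has exactly the shape required of a free extension.

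Next I would verify the universal property. Given a right $S-$poset $Z$, a $U-$morphism $\alpha:Y\to Z$ with $\beta=\alpha f$ an $S-$morphism, the extension-of-scalars adjunction furnishes an $S-$poset morphism $\bar\alpha:Y\otimes S\to Z$, $y\otimes s\mapsto\alpha(y)s$. A short computation shows $\bar\alpha(f\otimes 1)=\beta\varphi$, since both send $x\otimes s$ to $\beta(x)s$; so by the pushout property there is a unique $S-$morphism $\psi:P\to Z$ with $\psi\gamma=\bar\alpha$ and $\psi\delta=\beta$. Then $\psi g=\alpha$ and $\psi h=\beta$, and uniqueness of such a $\psi$ follows from uniqueness in the pushout property once one notes $\gamma(y\otimes s)=g(y)s$. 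Hence $(P,g)$ is a free $S-$extension of $X$ and $Y$, and Theorem~\ref{b1} identifies it with $F$.

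The routine verifications --- that $\varphi$ and $\bar\alpha$ are well-defined monotone $S-$poset morphisms and that $y\mapsto y\otimes 1$ is monotone --- are where the ordered setting demands slightly more care than the unordered case of~\cite{renshaw-1986}, but each reduces to the explicit order description of the tensor product recorded earlier in this section. I expect the only genuine subtlety to lie in keeping the two universal properties (pushout versus free extension) cleanly matched; once the structure maps $g$ and $h=\delta$ are correctly identified, the formal comparison is immediate.
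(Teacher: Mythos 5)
Your argument is correct: you verify that the pushout $P$, with $g(y)=\gamma(y\otimes 1)$ and $h=\delta$, satisfies the universal property of the free $S$-extension, and the identities $\gamma(f\otimes 1)=\delta\varphi$, $\bar\alpha(f\otimes 1)=\beta\varphi$ and $\gamma(y\otimes s)=g(y)s$ that you use all hold. The paper itself omits the proof as ``straightforward,'' and your universal-property comparison is exactly the intended routine verification (mirroring the unordered case in~\cite{renshaw-1986}), so there is nothing to add.
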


The proof is straightforward and details are left to the reader.

\medskip

Given a family $\{S_i:i\in I\}$ of pairwise disjoint posemigroups, the {\em free product}  ${\mathcal F}=\prod^\ast{S_i}$ of the family $\{S_i:i\in I\}$ is the set of non-empty words
$$
a_1\ldots a_n
$$
with each $a_k\in S_i$ for some $i\in I, 1\le k\le n$, and no two adjacent letters are in the same $S_i$. The product in ${\mathcal F}$ is defined by 
$$
(a_1\ldots a_n)(b_1\ldots b_m)=\begin{cases}a_1\ldots a_nb_1\ldots b_m&\text{ if }a_n\in S_i, b_1\in S_j, i\ne j\\a_1\ldots (a_nb_1)\ldots b_m&\text{ if }a_n,b_1\in S_i.\\\end{cases}
$$
Similarly we can define an order on ${\mathcal F}$ by $a_1\ldots a_r\le b_1\ldots b_s$ if and only if
\begin{enumerate}
\item $r=s$,
\item for each $1\le i\le r$, $a_i,b_i\in S_j$ for some $1\le j\le n$ and $a_i\le b_i$ in $S_j$.
\end{enumerate}
It is easy to check that ${\mathcal F}$ is then a posemigroup and that ${\mathcal F}$ together with the order embeddings $\theta_i:S_i\to {\mathcal F}$ given by $\theta_i(s_i)=s_i$, is the coproduct in the category of posemigroups of the family $\{S_i:i\in I\}$.

\medskip

Now suppose that
$\varphi_i:U\to S_i$ are posemigroup morphisms. Let $\sigma = \nu(R\cup R^{-1})$ be the posemigroup congruence on ${\mathcal F}$ generated by
$$
R=\{(\varphi_i(u),\varphi_j(u)):u\in U, i,j\in I\}.
$$
Define maps $\mu_i:S_i\to {\mathcal F}/\sigma$ by $\mu(s_i)=s_i\sigma$. Then we can easily check that $P={\mathcal F}/\sigma$ together with the $\mu_i$ is the pushout in the category of posemigroups of the family $\{S_i,\varphi_i:i\in I\}$.

\smallskip

Suppose that $w,w'$ are in ${\mathcal F}$ and consider the following four types of transitions. We say that $w$ is connected to $w'$ by an
\begin{enumerate}
\item {\em $S-$step} if $w=(s_1,\ldots,s_{i-1},u,s_{i+1},\ldots,s_n), \ w'=(s_1,\ldots,s_{i-1}us_{i+1},\ldots,s_n)$;
\item {\em $M-$step} if $w=(s_1,\ldots,s_{i-1}us_{i+1},\ldots,s_n), \ w'=(s_1,\ldots,s_{i-1},u,s_{i+1},\ldots,s_n)$;
\item {\em $E-$step} if any one of the following holds
\begin{enumerate}
\item $w=(s_1,\ldots,s_iu,s_{i+1},\ldots,s_n), \ w'=(s_1,\ldots,s_i,us_{i+1},\ldots,s_n)$;
\item $w=(s_1,\ldots,s_i,us_{i+1},\ldots,s_n), \ w'=(s_1,\ldots,s_iu,s_{i+1},\ldots,s_n)$;
\item $w=(s_1,\ldots,s_nu), \ w'=(s_1,\ldots,s_n,u)$;
\item $w=(s_1,\ldots,s_n,u), \ w'=(s_1,\ldots,s_nu)$;
\item $w=(us_1,\ldots,s_n), \ w'=(u,s_1,\ldots,s_n)$;
\item $w=(u,s_1,\ldots,s_n), \ w'=(us_1,\ldots,s_n)$;
\end{enumerate}
\item {\em $O-$step} if $w=(s_1,\ldots, s_i,\ldots, s_n), \ w'=(s_1,\ldots,s'_i,\ldots, s_n)$, where $s_i\le s'_i$.
\end{enumerate}
In a manner similar to~\cite{howie-1962}, it is straightforward to show that $w\sigma\le w'\sigma$ in $P={\mathcal F}/\sigma$ if and only if $w$ is connected to $w'$ by a finite sequence of $E-$, $S-$, $M-$ or $O-$steps. We shall make use of this later.

\medskip

In general we shall restrict our attention to the case when $|I|=2$. When $\varphi_{1}$ and $\varphi_{2}$ are order embeddings, we normally refer to the pushout ${\mathcal F}/\sigma$ as the {\em amalgamated free product} and denote it by  $S_{1}\ast_{U}S_{2}$. Notice that this is the same notation as in the unordered context but no confusion should arise.

If $U, S_1$ and $S_2$ are pomonoids and $\varphi_i$ pomonoid morphisms, then by identifying the identity elements of $S_1$ and $S_2$ within $\mathcal F$, we obtain the coproduct in the category of pomonoids and a construction similar to the above one gives the amalgamated free product in the category of pomonoids.

If $S$ is a posemigroup and if we denote by ${}^1S$ the monoid obtained by adjoining an identity 1 to $S$ regardless of whether $S$ already has an identity, then ${}^1S$ becomes a pomonoid if we extend the ordering on $S$ to ${}^1S$ by considering $1$ as an incomporable element in $^1S$.
As in the unordered case, (see~\cite{renshaw-1986}), it is straightforward to show that if $S_1\ast_U S_2$ is the amalgamated free product in the category of posemigroups of the posemigroup amalgam $[U;S_1,S_2]$ then ${}^1(S_1\ast_U S_2)$ is isomorphic to ${}^1S_1\ast_{{}^1U}{}^1S_2$, the amalgamated free product in the category of pomonoids of the pomonoid amalgam $[{}^1U;{}^1S_1,{}^1S_2]$. Consequently, from now on we shall only deal with pomonoid amalgams.

In~\cite{sohail-2011}, the above amalgamated free product is obtained by first endowing the corresponding monoid amalgamated free product with trivial order and then factoring it by an order congruence.

\medskip

{\bf Remark.}
As in every category in which pushouts exist, the pomonoid amalgam $A$ = $[U$; $S_{1}$, $S_{2}]$ is embeddable if and only if it is naturally embeddable in its free product.

\bigskip

Let $[U; S_1, S_2]$ be an amalgam of pomonoids. We define a direct system of $U-$posets $(Y_n,k_n)$ whose direct limit is isomorphic to $S_1\ast_U S_2$. The process is very similar to that in the unordered case and we direct the reader to~\cite{renshaw-1986} for more details. Let $Y_{1} = S_{1}$, $Y_{2} = S_{1} \otimes S_{2}$, and $k_1:Y_1\to Y_2$ be given by $k_1(s_1)=s_1\tensor1$. By way of induction, assume that we have constructed a sequence $Y_1,Y_2, \ldots, Y_{n-1}$ with maps $f_i:Y_i\to Y_{i+1}$ for $i=1,\ldots, n-2$. Define $Y_{n} = F(S_i; Y_{n-2}, Y_{n-1}) = (Y_{n-1} \otimes S_{i})/\delta_{n-2}, i\equiv n$ (mod 2),  where $\delta_{n-2}$ is the $S_i-$poset congruence induced on  $Y_{n-1} \otimes S_{i}$  by 
$$
V_{n-2}=\lbrace(k_{n-2}(y_{n-2})\otimes s_{i}, k_{n-2}(y'_{n-2})\otimes s'_{i}): y_{n-2}s_{i} \le y'_{n-2}s'_{i}\rbrace
$$
and let $k_{n-1}:Y_{n-1}\to Y_n$ be the associated $U-$poset morphism defined by $k_{n-1}(y_{n-1}) = (y_{n-1} \otimes 1)\delta_{n-2}$.
Then $(Y_{n}, k_{n})_{n\ge 1}$ is a direct system in the category of $U-$posets.

As with the unordered case, a typical element of $Y_n$ is
$$
y_n=(\ldots((s_1\tensor s_2\tensor s_3)\delta_1\tensor s_4)\delta_2\tensor\ldots\tensor s_n)\delta_{n-2}.
$$
We shall denote this by $y_n=[s_1, \dots, s_n]$ and a typical element of  $S_1 \ast_U S_2$ by $(s_1, \dots, s_n)$.

\begin{lemma}
For all $i \ge 2$,
$$[s_1, \dots, s_{i-1}, 1, s_{i+1}] = [s_1, \dots, s_{i-1} s_{i+1}, 1, 1]$$
\end{lemma}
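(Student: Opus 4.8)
The plan is to unwind both sides of the identity via the recursion defining $Y_n$ and then to observe that the two resulting elements of $Y_{i+1}$ are identified by the very relation that generates $\delta_{i-1}$. Write $w=[s_1,\ldots,s_{i-1}]\in Y_{i-1}$. Since adjacent letters alternate between the two factors, $s_{i-1}$ and $s_{i+1}$ lie in a common factor, say $S_j$ with $j\equiv i+1\pmod 2$, and $Y_{i-1}$ is a right $S_j$-poset; the inner tensor forming $Y_i=(Y_{i-1}\otimes S_{j'})/\delta_{i-2}$ uses the other factor $S_{j'}$, $j'\equiv i\pmod 2$.

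First I record two routine facts. From the construction and the observation following Theorem~\ref{b1} (with $g=k_{n-1}$) that $(y\otimes s)\rho=g(y)s$, one has $[s_1,\ldots,s_n]=(k_{n-1}([s_1,\ldots,s_{n-1}])\otimes s_n)\delta_{n-2}=k_{n-1}([s_1,\ldots,s_{n-1}])\,s_n$ at every level; and $w\,s_{i+1}$ is obtained from $w$ by replacing its last letter $s_{i-1}$ with $s_{i-1}s_{i+1}$, so $w\,s_{i+1}=[s_1,\ldots,s_{i-2},s_{i-1}s_{i+1}]$ by associativity of the $S_j$-action on $Y_{i-1}$. Applying the first fact twice, using $a\cdot 1=a$ and $(z\otimes 1)\delta_{i-1}\,s_{i+1}=(z\otimes s_{i+1})\delta_{i-1}$, the two sides collapse to
$$
[s_1,\ldots,s_{i-1},1,s_{i+1}]=(k_{i-1}(w)\otimes s_{i+1})\delta_{i-1},\qquad [s_1,\ldots,s_{i-2},s_{i-1}s_{i+1},1,1]=(k_{i-1}(w\,s_{i+1})\otimes 1)\delta_{i-1}.
$$

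It therefore suffices to show $(k_{i-1}(w)\otimes s_{i+1})\delta_{i-1}=(k_{i-1}(w\,s_{i+1})\otimes 1)\delta_{i-1}$ in $Y_{i+1}=(Y_i\otimes S_j)/\delta_{i-1}$, and here I would appeal directly to the generator
$$
V_{i-1}=\{(k_{i-1}(y)\otimes s,\,k_{i-1}(y')\otimes s'):ys\le y's'\}
$$
of $\delta_{i-1}=\nu(V_{i-1})$. Choosing $y=w$, $s=s_{i+1}$, $y'=w\,s_{i+1}$, $s'=1$ gives $ys=w\,s_{i+1}=(w\,s_{i+1})\cdot 1=y's'$ in $Y_{i-1}$, so both $ys\le y's'$ and $y's'\le ys$ hold, and hence both $(k_{i-1}(w)\otimes s_{i+1},\,k_{i-1}(w\,s_{i+1})\otimes 1)$ and its reverse belong to $V_{i-1}$. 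Since a pair in $V_{i-1}$ lies in $\alpha(V_{i-1})$, this yields $\le_{\alpha(V_{i-1})}$ in both directions, so the two elements are $\nu(V_{i-1})$-related and thus equal, as required.

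The only genuine obstacle is the mismatch of factors: $s_{i+1}\in S_j$ belongs to the factor opposite to $S_{j'}$, over which the inner tensor of $Y_i$ is formed, so it cannot simply be slid across that tensor. The crux is recognising that the defining relation $V_{i-1}$ of the next congruence is designed precisely to transfer the $S_j$-action on $Y_{i-1}$ onto the outer tensor coordinate; once this is seen the remaining work is bookkeeping, and the base case $i=2$ (where $w=s_1\in Y_1=S_1$ and $w\,s_3=s_1s_3$) is covered by the same computation.
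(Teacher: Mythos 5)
Your proof is correct and follows essentially the same route as the paper: both unwind the bracket notation to reduce the claim to the identity $(k_{i-1}(w)\otimes s_{i+1})\delta_{i-1}=(k_{i-1}(w\,s_{i+1})\otimes 1)\delta_{i-1}$ in $Y_{i+1}$. The only difference is that you justify this middle step explicitly from the generating relation $V_{i-1}$ of $\delta_{i-1}$, whereas the paper leaves it implicit (it is the statement that $k_i\circ k_{i-1}$ is an $S_j$-poset morphism in the free extension); your version is a harmless elaboration of the same argument.
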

\begin{proof}
Suppose $y_{i-1} = [s_1, \dots, s_{i-1}] \in Y_{i-1}$. Then, 

 \begin{equation*}
\begin{split}
[s_1, \dots, s_{i-1}, 1, s_{i+1}] & = ((y_{i-1} \otimes 1) \delta_{i-2} \otimes s_{i+1})\delta_{i-1}\\
& = (k_{i-1}(y_{i-1}) \otimes s_{i+1})\delta_{i-1}\\
& = (k_{i-1}(y_{i-1} s_{i+1}) \otimes 1)\delta_{i-1}\\
& = ((y_{i-1} s_{i+1} \otimes 1) \delta_{i-2} \otimes 1)\delta_{i-1}\\
& = [s_1, \dots, s_{i-1} s_{i+1}, 1, 1]
\end{split}
 \end{equation*}
\end{proof}
Consequently we can deduce
\begin{corollary} \label{free-extension-word-corollary}
For all $i \ge 2$, $[s_1, \dots, s_{i-1}, 1, s_{i+1}, \dots, s_n] = [s_1, \dots, s_{i-1} s_{i+1}, \dots, s_n, 1, 1]$.
\end{corollary}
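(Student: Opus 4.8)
The plan is to argue by induction on $n$ (for a fixed position $i$), taking the preceding Lemma as the base case $n=i+1$. There the trailing block $s_{i+2},\dots,s_n$ is empty, and the Corollary reads $[s_1,\dots,s_{i-1},1,s_{i+1}]=[s_1,\dots,s_{i-1}s_{i+1},1,1]$, which is exactly the Lemma. The induction is thus on the number $n-i-1$ of letters following the inserted unit.

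First I would record the two structural identities that drive the induction, both immediate from the definitions of $Y_n$, of $k_{n-1}$, and of the $S_i$-action. Since $k_{n-1}(y_{n-1})=(y_{n-1}\otimes 1)\delta_{n-2}$, applying $k_{n-1}$ merely appends a unit, so that $k_{n-1}([t_1,\dots,t_{n-1}])=[t_1,\dots,t_{n-1},1]$; and since the action satisfies $(y\otimes s)t=y\otimes st$, multiplying by $s_n$ fills the last slot, giving $k_{n-1}([t_1,\dots,t_{n-1}])\,s_n=[t_1,\dots,t_{n-1},s_n]$. In particular $[s_1,\dots,s_n]=k_{n-1}([s_1,\dots,s_{n-1}])\,s_n$, where $s_n$ lies in the factor $S_i$ with $i\equiv n\ (\mathrm{mod}\ 2)$ over which $Y_n$ is built.

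For the inductive step I would begin with $[s_1,\dots,s_{i-1},1,s_{i+1},\dots,s_n]=k_{n-1}([s_1,\dots,s_{i-1},1,s_{i+1},\dots,s_{n-1}])\,s_n$, apply the induction hypothesis to the length-$(n-1)$ word inside, and then append the unit and act by $s_n$ using the two identities above. This produces $[s_1,\dots,s_{i-1}s_{i+1},s_{i+2},\dots,s_{n-1},1,1,s_n]$: the trailing letter $s_n$ has been generated, but it now sits \emph{after} the two units rather than before them.

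The main obstacle, and the only genuinely nonroutine move, is reconciling this with the target word. Here I would invoke the Lemma one last time, applied to the final three entries $1,1,s_n$. The entry two places before $s_n$ is a unit lying in the same factor $S_i$ as $s_n$ (the components alternate, so positions $n-2$ and $n$ always share a factor), so the Lemma yields $[\dots,1,1,s_n]=[\dots,1\cdot s_n,1,1]=[\dots,s_n,1,1]$, sliding $s_n$ past the two units and leaving $[s_1,\dots,s_{i-1}s_{i+1},s_{i+2},\dots,s_n,1,1]$, precisely the right-hand side. The remaining points need only bookkeeping: one checks that each merged product, such as $s_{i-1}s_{i+1}$ and $1\cdot s_n$, is a genuine product inside a single factor, which holds because entries two positions apart always lie in the same $S_i$.
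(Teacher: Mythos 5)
Your proof is correct and follows exactly the route the paper intends: the Corollary is stated as an immediate consequence of the preceding Lemma, and your induction (peel off $s_n$ via $[t_1,\dots,t_{n-1},s_n]=k_{n-1}([t_1,\dots,t_{n-1}])s_n$, apply the hypothesis, then use the Lemma once more with $i=n-1$ to slide $s_n$ past the two trailing units) is precisely the iteration the authors leave to the reader. The parity bookkeeping you note, that entries two positions apart lie in the same factor, is the only point needing care and you have handled it correctly.
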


In addition, from Lemma~\ref{free-extension-order-lemma} we can easily deduce

\begin{lemma} \label{yn-order-lemma}
For all $i \ge 2$, if $s_i\le t_i$ in $S_j$, $(j\in\{1,2\}, j\equiv i\ \mod\ 2)$ then
$$
[s_1, \dots, s_{i-1}, s_i, s_{i+1}, \dots, s_n] \le [s_1, \dots, s_{i-1},t_i, s_{i+1}, \dots, s_n].
$$
\end{lemma}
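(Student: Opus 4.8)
The plan is to induct on $n$, exploiting the recursive description of $Y_n$ as a free $S_i$-extension. Recall from the construction that $Y_n=(Y_{n-1}\otimes S_i)/\delta_{n-2}$ with $i\equiv n\pmod{2}$, and that in this notation the word $[s_1,\dots,s_n]$ is precisely $([s_1,\dots,s_{n-1}]\otimes s_n)\delta_{n-2}$; that is, peeling off the last letter corresponds to undoing the outermost tensor-and-quotient. Lemma~\ref{free-extension-order-lemma}, applied with $Y=Y_{n-1}$, $S=S_i$ and $\rho=\delta_{n-2}$, is exactly the tool that pushes an inequality through this outermost layer: if $y\le y'$ in $Y_{n-1}$ and $s\le s'$ in $S_i$, then $(y\otimes s)\delta_{n-2}\le(y'\otimes s')\delta_{n-2}$ in $Y_n$.

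With this in hand I would split the inductive step, for a fixed $i$ with $2\le i\le n$, into two cases according to whether the varied letter is the last one. If $i=n$, I set $y=y'=[s_1,\dots,s_{n-1}]$ and apply the Lemma with $s=s_i\le t_i=s'$, both lying in the $S_j$ with $j\equiv n\equiv i\pmod{2}$; this yields $[s_1,\dots,s_{n-1},s_i]\le[s_1,\dots,s_{n-1},t_i]$ immediately. If instead $2\le i<n$, I invoke the induction hypothesis at level $n-1$ (legitimate since then $2\le i\le n-1$) to obtain $[s_1,\dots,s_i,\dots,s_{n-1}]\le[s_1,\dots,t_i,\dots,s_{n-1}]$ in $Y_{n-1}$, call these $y\le y'$, and apply the Lemma with $s=s'=s_n$ to conclude $[s_1,\dots,s_i,\dots,s_n]\le[s_1,\dots,t_i,\dots,s_n]$ in $Y_n$. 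The base case is $n=2$, where necessarily $i=2$ and $Y_2=S_1\otimes S_2$; there $[s_1,s_2]\le[s_1,t_2]$ is just monotonicity of the right $S_2$-action (equivalently Lemma~\ref{order-lemma}(1) with equal first arguments).

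I do not anticipate any genuine obstacle: the entire content is the observation that adjusting one letter only ever needs to be transported through the outermost layer, once the shorter word underneath has been adjusted by induction. The two things that repay a little care are bookkeeping rather than difficulty. First, the hypothesis $i\ge2$ is exactly what makes the induction start, since a varied letter $s_i$ first occurs as the outer tensorand at stage $n=i$, whereas $s_1$ is the seed $Y_1=S_1$ and never appears as an outer tensorand; this is why the last-letter case supplies the base of the recursion for each fixed $i$. Second, the parity condition $j\equiv i\pmod{2}$ must stay consistent, and it does so automatically because $i$ is held fixed while only the running length $n$ increases.
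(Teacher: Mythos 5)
Your argument is correct and is exactly the deduction the paper has in mind: it states the result as an easy consequence of Lemma~\ref{free-extension-order-lemma}, and your induction on the word length, pushing the inequality through the outermost layer $Y_n=(Y_{n-1}\otimes S_j)/\delta_{n-2}$ (with the $n=2$ base handled by Lemma~\ref{order-lemma}(1)), is precisely that deduction spelled out.
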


\medskip

The proof of the following major result is almost identical to that in the unordered case and is therefore omitted. For the interested reader, more details can be found in~\cite[Theorem 1]{sohail-2014} and~\cite[Theorem 3.3.9]{al-subaiei-2014}.

\begin{theorem} \label{direct-limit-theorem}
Let $[U, S_{1}, S_{2}]$ be an amalgam of pomonoids. Then, $(S_{1}\ast_US_{2},\varphi_n)$ is the direct limit in the category of $(U, U)-$posets of the direct system $(Y_{n}, k_{n})_{n\ge 1}$ where $\varphi_n:Y_n\to S_1\ast_US_2$ is given by $\varphi_n([s_1,\ldots,s_n])=(s_1,\ldots,s_n)$.
\end{theorem}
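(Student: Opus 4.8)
The plan is to exhibit $(S_1\ast_US_2,\varphi_n)$ as a cocone over the direct system and then show it enjoys the universal property, by comparing it with the concrete direct limit $(X,\alpha_n)$ supplied by Lemma~\ref{direct-limit-lemma}. The first move is to define the maps $\varphi_n$ inductively rather than by the displayed formula, so that well-definedness and compatibility come for free. Since $P=S_1\ast_US_2$ is a pomonoid it is in particular a right $S_i$-poset via $\mu_i$; take $\varphi_1=\mu_1:S_1\to P$ and let $\varphi_2:S_1\otimes S_2\to P$ be the evident morphism $s_1\otimes s_2\mapsto(s_1,s_2)$, which is balanced over $U$ precisely because $\mu_1(u)=\mu_2(u)$ in $P$. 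For $n\ge 3$, assuming $\varphi_{n-1}:Y_{n-1}\to P$ has been built as a right $U$-poset morphism for which $\varphi_{n-1}k_{n-2}=\varphi_{n-2}:Y_{n-2}\to P$ is a right $S_i$-poset morphism ($i\equiv n$), the universal property of the free $S_i$-extension (Theorem~\ref{b1}), applied to $Y_n=F(S_i;Y_{n-2},Y_{n-1})$, yields a unique right $S_i$-poset morphism $\varphi_n:Y_n\to P$ with $\varphi_nk_{n-1}=\varphi_{n-1}$. This simultaneously produces the cocone condition and the stated formula $\varphi_n([s_1,\dots,s_n])=(s_1,\dots,s_n)$; the left $U$-action sits on the first coordinate throughout and is preserved, so each $\varphi_n$ is a $(U,U)$-poset morphism.

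By the universal property of the genuine direct limit $X$ there is then a unique $(U,U)$-poset morphism $\Psi:X\to P$ with $\Psi\alpha_n=\varphi_n$, and it remains to check that $\Psi$ is an order isomorphism. Surjectivity is immediate: any element $(s_1,\dots,s_k)$ of $P$ may be written with its letters alternating between $S_1$ and $S_2$ after inserting identities, and Corollary~\ref{free-extension-word-corollary} shows that such a padded word is $\varphi_k([s_1,\dots,s_k])$, so the $\varphi_n$ are jointly onto and hence so is $\Psi$.

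The substance of the proof is the order-matching: I must show that $\alpha_i(x_i)\le\alpha_j(x_j)$ in $X$ if and only if $\varphi_i(x_i)\le\varphi_j(x_j)$ in $P$. One direction holds because $\Psi$ is monotone. For the converse, suppose the words representing $\varphi_i(x_i)$ and $\varphi_j(x_j)$ satisfy $w\sigma\le w'\sigma$ in $P$; by the $E/S/M/O$-step characterisation of the order on $P$ established before the restriction to $|I|=2$, these words are linked by a finite chain of steps. I would lift each step to a relation between the corresponding bracketed elements in a common $Y_k$: the $S$-, $M$- and $E$-steps, which move identities and $U$-elements along the word, correspond to the identity-shifting equalities of Corollary~\ref{free-extension-word-corollary} together with the balancing relations $xu\otimes s=x\otimes us$ defining the tensor products over $U$, while an $O$-step corresponds directly to Lemma~\ref{yn-order-lemma} (and to Lemma~\ref{free-extension-order-lemma}). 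Padding all words up to a single index $k\ge i,j$ via Corollary~\ref{free-extension-word-corollary} and composing, one obtains $\varphi_k^i(x_i)\le\varphi_k^j(x_j)$ in $Y_k$, whence $\alpha_i(x_i)\le\alpha_j(x_j)$ in $X$ by Lemma~\ref{direct-limit-lemma}(1). Thus $\Psi$ both preserves and reflects order and is a bijection, so it is an order isomorphism; uniqueness of direct limits then gives the result.

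I expect the main obstacle to be the bookkeeping in this last step, namely reconciling the length and alternation conventions as one converts the normalised step-language on $P$ into comparisons inside the $Y_k$: an $S$- or $M$-step alters the number of letters, so the lift genuinely needs Corollary~\ref{free-extension-word-corollary} to realign everything in one $Y_k$, and each $E$-step must be recognised as an instance of the $U$-balancing relation. Tracking the two-sided $U$-action through these manipulations is routine given Lemma~\ref{order-lemma}, and the whole argument runs parallel to the unordered treatment in~\cite{renshaw-1986}, the only genuinely new input being the handling of the $O$-steps.
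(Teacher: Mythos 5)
Your proposal is correct and follows essentially the route the paper intends: it adapts the unordered argument of~\cite{renshaw-1986} (inductive construction of the cocone maps via the universal property of free extensions, surjectivity by padding with identities via Corollary~\ref{free-extension-word-corollary}, and order-matching by lifting the $E$-, $S$-, $M$-steps to tensor-balancing identities in the $Y_k$), with the $O$-steps handled by Lemma~\ref{yn-order-lemma} as the only genuinely new ordered ingredient. The paper itself omits the details for exactly this reason, so no further comparison is needed.
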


\bigskip

If we define, for $n\ge 2$, $k^{(n-1)}$ = $k_{n-1} \circ k_{n-2} \circ \dots \circ k_{1} : Y_{1} \to Y_{n}$,  $h^{1}: S_{2} \to Y_{2}$, and  $h^{(n-1)}$ = $k_{n-1} \circ k_{n-2} \circ \dots \circ k_{2} \circ h^{1}: S_{2} \to Y_{n}$, then it is straightforward to show the following results.

\begin{lemma} \label{poamalgam-weak-embeddability-condition-lemma}
The pomonoid amalgam $A = [U; S_{1}, S_{2}]$ is weakly embeddable (resp. poembeddable) if and only if for all $n \ge 1$ the maps  $k^{n}$ and $h^{n}$ are monomorphisms (resp. order embeddings).
\end{lemma}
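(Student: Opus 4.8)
The plan is to reduce the two-sided statement to the behaviour of the two canonical maps into the amalgamated free product $P=S_1\ast_U S_2$, and then to read off their injectivity / order-embedding from the direct system $(Y_n,k_n)$ by means of Lemma~\ref{direct-limit-lemma}. First I would invoke the earlier remark that the amalgam is embeddable if and only if it is naturally embeddable in its free product: concretely, $A$ is weakly embeddable (resp. poembeddable) exactly when the canonical maps $\mu_1:S_1\to P$ and $\mu_2:S_2\to P$ are monomorphisms (resp. order embeddings). The compatibility $\mu_1\varphi_1=\mu_2\varphi_2$ holds by construction of $P$, so the only content is the injectivity / order-embedding of $\mu_1$ and $\mu_2$. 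Using Theorem~\ref{direct-limit-theorem}, which presents $P$ as the direct limit of $(Y_n,k_n)$ with structure maps $\varphi_n:Y_n\to P$, these canonical maps are $\mu_1=\varphi_1$ (since $Y_1=S_1$) and $\mu_2=\varphi_2\circ h^1$ (since $h^1(s_2)=1\otimes s_2$ and $\varphi_2(1\otimes s_2)=(s_2)$).

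The map $\mu_1=\varphi_1$ is immediate: the index set $\mathbb{N}$ is directed, so parts (2) and (3) of Lemma~\ref{direct-limit-lemma} say that $\varphi_1$ is a monomorphism (resp. order embedding) if and only if each transition map $\varphi_k^1=k^{(k-1)}$ is, for every $k$; since $\varphi_1^1=1_{Y_1}$ is harmless, this is precisely the condition that $k^n$ be a monomorphism (resp. order embedding) for all $n$.

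The delicate point, and the step I expect to be the main obstacle, is $\mu_2$, because $\mu_2$ is not one of the structure maps $\varphi_n$ but the composite $\varphi_2\circ h^1$, so Lemma~\ref{direct-limit-lemma} does not apply to it directly. The bridge is the identity $\varphi_n\circ h^{(n-1)}=\mu_2$ for every $n$, which follows from $h^{(n-1)}=\varphi_n^2\circ h^1$ together with $\varphi_n\circ\varphi_n^2=\varphi_2$. From this the forward implication is essentially free, since a factor of an injective composite is injective, and $\varphi_n,h^{(n-1)}$ are monotone so a factor of an order-embedding composite is an order embedding. For the converse I would exploit the eventual nature of the order in a directed colimit: if $\mu_2(s)\le\mu_2(s')$ then $\varphi_2 h^1(s)\le\varphi_2 h^1(s')$, so by Lemma~\ref{direct-limit-lemma}(1) there is some $k\ge 2$ with $\varphi_k^2 h^1(s)\le\varphi_k^2 h^1(s')$, that is $h^{(k-1)}(s)\le h^{(k-1)}(s')$; if every $h^{(k-1)}$ is an order embedding this forces $s\le s'$. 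The injective case runs the same way, writing $a=b$ as $a\le b\le a$ and passing to a common upper bound $k$ of the two witnessing indices so that both inequalities hold after applying $\varphi_k^2$.

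Assembling the two halves, $A$ is weakly embeddable (resp. poembeddable) if and only if both $\mu_1$ and $\mu_2$ are monomorphisms (resp. order embeddings), and by the above this holds if and only if all the maps $k^n$ and $h^n$ are monomorphisms (resp. order embeddings), as required.
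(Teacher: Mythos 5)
Your argument is correct and is exactly the intended route: the paper omits the proof as ``straightforward,'' and the natural argument is the one you give, reducing to the canonical maps $\mu_1=\varphi_1$ and $\mu_2=\varphi_2\circ h^1$ into $S_1\ast_US_2$ via the remark on natural embeddability in the free product, and then applying Lemma~\ref{direct-limit-lemma} to the direct system of Theorem~\ref{direct-limit-theorem}. Your handling of the only subtle point --- that $\mu_2$ is not a structure map, so one must use the factorisations $\mu_2=\varphi_n\circ h^{(n-1)}$ for the forward direction and part (1) of Lemma~\ref{direct-limit-lemma} for the converse --- is also correct.
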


\begin{lemma} \label{poamalgam-strong-embeddability-condition-lemma}
Let the pomonoid amalgam $[U; S_1, S_2]$ be weakly embeddable (resp. poembeddable) and $\varphi_2: Y_2 \to S_{1}\ast_US_{2}$ be one to one. Then, the pomonoid amalgam is strongly embeddable (resp. strongly poembeddable) if and only if $s_1 \otimes 1 = 1 \otimes s_2 $ in $Y_2$ implies $s_1 = s_2 \in U$.
\end{lemma}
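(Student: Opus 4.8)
The plan is to translate the purely set-theoretic intersection condition that defines strong embeddability into the tensor-product statement, using the direct-limit map $\varphi_2$ as the bridge. First I would pin down the two canonical embeddings in terms of the direct system. The map $\theta_1\colon S_1\to S_1\ast_US_2$ is the direct-limit map restricted to $Y_1=S_1$, so $\theta_1(s_1)=(s_1)$, while $\theta_2=\varphi_2\circ h^1$, so that $\theta_2(s_2)=\varphi_2(1\otimes s_2)=(s_2)$. The observation to record at the outset is that, since the identity of $S_2$ is absorbed in the free product, $\theta_1(s_1)=(s_1)=(s_1,1)=\varphi_2(s_1\otimes 1)$ and likewise $\theta_2(s_2)=\varphi_2(1\otimes s_2)$; both canonical images therefore lie in $\varphi_2(Y_2)$ and differ only in the chosen representative in $Y_2=S_1\otimes S_2$.

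Next I would assemble the three injectivity facts I need. Weak embeddability, via Lemma~\ref{poamalgam-weak-embeddability-condition-lemma}, makes every $k^n$ and $h^n$ a monomorphism; together with Lemma~\ref{direct-limit-lemma} this shows that $\theta_1$ (a direct-limit map) and $\theta_2=\varphi_2\circ h^1$ are one-to-one. Combining these with the assumed injectivity of $\varphi_2$ gives the central equivalence: for $s_1\in S_1$, $s_2\in S_2$,
$$\theta_1(s_1)=\theta_2(s_2)\iff \varphi_2(s_1\otimes 1)=\varphi_2(1\otimes s_2)\iff s_1\otimes 1=1\otimes s_2\ \text{in } Y_2.$$

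With this equivalence in hand the two implications are short. For necessity, assume strong embeddability, so $\theta_1(S_1)\cap\theta_2(S_2)=\theta_1\varphi_1(U)$; if $s_1\otimes 1=1\otimes s_2$ then the equivalence gives $\theta_1(s_1)=\theta_2(s_2)$, an element of the intersection, hence $\theta_1\varphi_1(u)$ for some $u\in U$, and injectivity of $\theta_1$ and $\theta_2$ applied to $\theta_1(s_1)=\theta_1\varphi_1(u)$ and $\theta_2(s_2)=\theta_2\varphi_2(u)$ yields $s_1=\varphi_1(u)$, $s_2=\varphi_2(u)$, i.e. $s_1=s_2\in U$. For sufficiency the inclusion $\theta_1\varphi_1(U)\subseteq\theta_1(S_1)\cap\theta_2(S_2)$ is automatic from $\theta_1\varphi_1=\theta_2\varphi_2$; conversely any element of the intersection is some $\theta_1(s_1)=\theta_2(s_2)$, the equivalence produces $s_1\otimes 1=1\otimes s_2$, and the hypothesis forces $s_1=\varphi_1(u)$, $s_2=\varphi_2(u)$, so the element equals $\theta_1\varphi_1(u)$.

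Finally, the parenthetical poembeddable case requires no separate argument: strong poembeddability differs from strong embeddability only in demanding that the $\theta_i$ be order embeddings rather than monomorphisms, and that demand is met by the poembeddability hypothesis, while the additional condition to be checked is the very same set-theoretic intersection identity; since order embeddings are in particular injective, the displayed equivalence and both implications carry over verbatim. I expect the only real care to be needed in the first step — correctly identifying $\theta_1,\theta_2$ with the direct-system maps and justifying $\theta_1(s_1)=\varphi_2(s_1\otimes 1)$ — after which the proof is a brief diagram chase driven entirely by the three injectivity facts.
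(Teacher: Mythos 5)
Your proof is correct and is precisely the ``straightforward'' argument the paper omits: identifying $\theta_1(s_1)=\varphi_2(s_1\otimes 1)$ and $\theta_2(s_2)=\varphi_2(1\otimes s_2)$, then using injectivity of $\varphi_2$ (hypothesis) together with injectivity of $\theta_1,\theta_2$ (from weak embeddability via Lemma~\ref{poamalgam-weak-embeddability-condition-lemma} and Lemma~\ref{direct-limit-lemma}) to convert the intersection condition into the tensor condition. The poembeddable case is handled correctly as well, since the extra condition for strong poembeddability is the same set-theoretic one.
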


\bigskip

Let $U$ be a subpomonoid of a pomonoid $S$. Then $U$ has the {\em right poextension (resp. extension) property} in $S$ if the map $X\to X\otimes S, x\mapsto x\otimes1$ is an order embedding (resp. monomorphism) for every right $U-$poset $X$. The {\em left poextension property} is defined dually. We say that $U$ has the {\em poextension (resp. extension) property} in $S$ if for every left $U-$poset $X$ and right $U-$poset $Y$ the map $X\otimes Y\to X\otimes S\otimes Y, x\otimes y\mapsto x\otimes1\otimes y$ is an order embedding (resp. monomorphism). If $U$ has the poextension (resp. extension) property in every pomonoid containing $U$, then we shall say that $U$ is {\em absolutely poextendable (resp. extendable)}.

\smallskip

Let $U$ be a subpomonoid of a pomonoid $S$. We shall say that the pair $(U,S)$ is a {\em weak amalgamation (resp. poamalgamation) pair} if for every pomonoid $T$ containing $U$ the amalgam $[U;S,T]$ is weakly embeddable (resp. poembeddable). $U$ is called a {\em weak amalgamation (resp.  poamalgamation) base} if for every pomonoid $S$, $(U,S)$ is a  weak amalgamation (resp. poamalgamation) pair.

\begin{theorem} \label{we15}
Let $(U; S)$ be a weak poamalgamation (resp. amalgamation) pair in the category of pomonoids. Then $U$ has the poextension (resp. extension) property in $S$.
\end{theorem}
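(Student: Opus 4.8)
The plan is to deduce the extension property from weak (po)embeddability by manufacturing, out of the two given $U$-posets, a single pomonoid $T$ to which the hypothesis applies, and then comparing two tensor products inside the amalgamated free product $S\ast_U T$. Write $X$ for the right $U$-poset and $Y$ for the left $U$-poset in question (so that $X\otimes Y$, and $X\otimes S\otimes Y$ with $S$ viewed as a $(U,U)$-poset, are defined). The map $\iota:X\otimes Y\to X\otimes S\otimes Y$, $x\otimes y\mapsto x\otimes 1\otimes y$, is induced by the inclusion $U\hookrightarrow S$ of $(U,U)$-posets and is therefore always monotone (cf. the remark preceding Lemma~\ref{associative-lemma} and Lemma~\ref{order-lemma}). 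Hence the entire content is to show that $\iota$ \emph{reflects} order: that $x\otimes 1\otimes y\le x'\otimes 1\otimes y'$ forces $x\otimes y\le x'\otimes y'$.

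First I would build $T$. Regard $X$ as a $(U,U)$-poset by letting $U$ act trivially on the left ($ux=x$) while keeping its given right action, and dually regard $Y$ as a $(U,U)$-poset with trivial right action; then $Z:=X\otimes_U Y$ inherits the trivial $(U,U)$-action. Put $T=\{0\}\,\dot\cup\,U\,\dot\cup\,X\,\dot\cup\,Y\,\dot\cup\,Z$, declare the five blocks mutually incomparable (each retaining its own order), and extend the multiplication of $U$ by setting $x\cdot y=x\otimes y\in Z$ for $x\in X,\ y\in Y$, by the stipulated one-sided $U$-actions on $X,Y,Z$, and by sending every other product of non-identity elements to $0$. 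The one relation associativity really demands, namely $(xu)\cdot y=x\cdot(uy)$, is exactly the defining identity $xu\otimes y=x\otimes uy$ of $Z$; granting this, $T$ is a pomonoid containing $U$ as a subpomonoid, and $X$, $Y$ and $Z=X\otimes Y$ each embed into $T$ as convex order-subposets.

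Next I would feed the amalgam $[U;S,T]$ to the hypothesis. Since $(U,S)$ is a weak poamalgamation (resp. amalgamation) pair, this amalgam is poembeddable (resp. weakly embeddable), so by Lemma~\ref{poamalgam-weak-embeddability-condition-lemma} and Theorem~\ref{direct-limit-theorem}, together with the fact that an amalgam embeds exactly when it embeds in its free product, the canonical map $\theta_T:T\to S\ast_U T$ is an order embedding (resp. a monomorphism); let $\theta_S:S\to S\ast_U T$ be the other structure map. Composing $\theta_T$ with $Z\hookrightarrow T$ shows that $X\otimes Y\xrightarrow{\cong}Z\hookrightarrow S\ast_U T$ is an order embedding (resp. injective). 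Now define $\Phi:X\otimes S\otimes Y\to S\ast_U T$ by $x\otimes s\otimes y\mapsto\theta_T(x)\,\theta_S(s)\,\theta_T(y)$. It is well defined because sliding a $u\in U$ across a letter is precisely the identification made in $S\ast_U T$, and it is monotone because it factors as the tensored morphism $X\otimes S\otimes Y\to(S\ast_U T)\otimes(S\ast_U T)\otimes(S\ast_U T)$ followed by the monotone, $U$-balanced multiplication of the posemigroup $S\ast_U T$. Crucially, $\Phi\circ\iota$ sends $x\otimes y$ to $\theta_T(x)\theta_T(y)=\theta_T(x\otimes y)$, so $\Phi\circ\iota$ is exactly the order embedding $X\otimes Y\hookrightarrow S\ast_U T$ of the previous step.

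The theorem then falls out: if $\iota(x\otimes y)\le\iota(x'\otimes y')$, applying the monotone $\Phi$ gives $(\Phi\iota)(x\otimes y)\le(\Phi\iota)(x'\otimes y')$ in $S\ast_U T$, and since $\Phi\iota$ is an order embedding this yields $x\otimes y\le x'\otimes y'$; the identical argument with ``order embedding'' replaced by ``one-to-one'' handles the resp. statement. I expect the main obstacle to be the construction of $T$: verifying associativity and order-compatibility of the multiplication across all five blocks, and in particular confirming that the trivial one-sided actions mesh with the genuine ones so that the sole nontrivial constraint is the tensor relation defining $Z$. Checking that $\Phi$ is well defined and monotone, and extracting ``$\theta_T$ is an order embedding'' from Lemma~\ref{poamalgam-weak-embeddability-condition-lemma}, are the remaining points requiring care but are routine with the machinery already in place.
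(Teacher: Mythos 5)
Your proof is correct and follows essentially the same strategy as the paper's: manufacture a pomonoid $T$ containing $X$, $Y$ and $X\otimes_U Y$ via trivial one-sided $U$-actions, apply the weak poamalgamation hypothesis to the amalgam $[U;S,T]$, and reflect the order through the multiplication map $X\otimes S\otimes Y\to S\ast_U T$ using the fact that $\lambda_2:T\to S\ast_U T$ is an order embedding. The only real difference is in the construction of $T$: the paper takes the full tensor algebra $T=\dot\cup_{n\ge0}Z^{(n)}$ with $Z=X\dot\cup Y$, whereas you truncate to $\{0\}\,\dot\cup\,U\,\dot\cup\,X\,\dot\cup\,Y\,\dot\cup\,(X\otimes_U Y)$ with a zero absorbing the unwanted products --- a variant that checks out (associativity and order-compatibility hold block by block) and even spares you the paper's final step of decomposing $Z\otimes_U Z$ into four summands.
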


\begin{proof}
The other argument being similar, we prove this for poamalgamation pairs. 

Let $X$ be a right $U-$poset and $Y$ a left $U-$poset. Suppose $Z = X \dot\cup Y$ with componentwise order and extend the action of $U$ on $X$ and $Y$ to a bi-action on $Z$ as follows. Let $u.x = x$, $y.u = y$ for all $x \in X$, $y \in Y$, $u \in U$, and $x.u$ (resp. $u.y$) is evaluated in $X_U$ (${}_UY$). It is easy to check that $Z$ is a $(U, U)-$poset. Let $Z^{(0)} = U$, $Z^{(1)} = Z$, and $Z^{(n)} = Z^{(n-1)} \otimes_U Z$ for all $n \ge 2$. Let $T = \dot\cup_{n \ge 0}  Z^{(n)}$ and extend the multiplication of $U$ to $T$ by
$$
\begin{array}{rcl}
(z_1 \otimes \dots \otimes z_m).(w_1 \otimes \dots \otimes w_n)&=& z_1 \otimes \dots \otimes z_m \otimes w_1 \otimes \dots \otimes w_n,\\
u.(z_1 \otimes \dots \otimes z_m)& =& (uz_1) \otimes \dots \otimes z_m,\\
(z_1 \otimes \dots \otimes z_m).u &=& z_1 \otimes \dots \otimes (z_mu).
\end{array}
$$
It is clear that $T$ is a monoid. Also $T$ is a poset with order induced from the order on the components $Z^{(n)}$.
The compatibility of this order follows easily from Lemma~\ref{order-lemma}.

Hence $U$ is a subpomonoid of the pomonoid $T$ and so $[U; S, T]$ is a pomonoid amalgam which, by assumption, is weakly embeddable in its amalgamated free product. In other words the the following diagram commutes:
$$
\begin{diagram}
\node{ U}  \arrow{e,t}{\varphi_1} \arrow{s,l}{\varphi_2}  \node{S}  \arrow{s,r}{\lambda_1}\\
 \node{T}  \arrow{e,b}{\lambda_2} \node{S \ast_U T}
\end{diagram}.
$$ 

Suppose $x \otimes 1 \otimes y$ $\le$ $x' \otimes 1 \otimes y'$ in $X \otimes S \otimes Y$. Then, since the map $X \otimes S \otimes Y \rightarrow T \otimes S \otimes  T$ given by $x \otimes s \otimes y \mapsto x \otimes s \otimes y$ is monotone, it follows that $x \otimes 1 \otimes y \le x' \otimes 1 \otimes y'$ in $T \otimes S \otimes T$. Consequently, because the map $T \otimes  S \otimes  T \rightarrow S \ast_U T$ given by $t \otimes s \otimes t'\mapsto \lambda_2(t) \lambda_1(s) \lambda_2(t')$ is also monotone, we deduce that $\lambda_2(x) \lambda_1(1) \lambda_2(y)  \le  \lambda_2(x')  \lambda_1(1) \lambda_2(y')$ in $S \ast_U T$ and so $\lambda_2(xy) \le \lambda_2(x'y')$. But $\lambda_2$ is an order embedding and so $xy \le x'y'$ in $T$.
Now the  map $X \otimes Y \rightarrow T$  given by $x \otimes y \mapsto xy$ is an order embedding since if $xy \le x'y'$ in $T$ then from the definition of the multiplication on $T$ we have
$$
x \otimes y \le x' \otimes  y'\text{ in }Z\otimes_UZ\cong (X\dot\cup Y)\otimes_U(X\dot\cup Y)\cong (X\otimes_U X)\dot\cup(X\otimes_UY)\dot\cup(Y\otimes_UX)\dot\cup(Y\otimes_UY)
$$
and so $x \otimes y \le x' \otimes  y'$ in $X\otimes Y$ as required.
\end{proof}

\begin{corollary}\label{poamalgamation-base-lemma}
Let $U$ be a weak poamalgamation (resp. amalgamation) base in the category of pomonoids. Then $U$ is absolutely  poextendable (resp. extendable).
\end{corollary}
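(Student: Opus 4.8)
The plan is to obtain this immediately from Theorem~\ref{we15} by a straightforward unwinding of the definitions, so I expect no genuine obstacle here beyond keeping the quantifiers straight. The key point is that both ``absolutely poextendable'' and ``weak poamalgamation base'' are properties quantified over \emph{all} pomonoids containing $U$, while Theorem~\ref{we15} supplies exactly the pointwise implication—``pair'' to ``poextension property''—needed to convert one universally quantified statement into the other.

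Concretely, I would fix an arbitrary pomonoid $S$ having $U$ as a subpomonoid. Since $U$ is assumed to be a weak poamalgamation base, the definition of a base says that for this $S$ the pair $(U,S)$ is a weak poamalgamation pair. Theorem~\ref{we15} then applies directly to $(U,S)$ and tells us that $U$ has the poextension property in $S$; that is, for every left $U$-poset $X$ and every right $U$-poset $Y$ the canonical map $X\otimes Y\to X\otimes S\otimes Y$, $x\otimes y\mapsto x\otimes 1\otimes y$, is an order embedding.

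Because $S$ was chosen arbitrarily among pomonoids containing $U$, this establishes the poextension property of $U$ in \emph{every} such $S$, which is precisely the definition of $U$ being absolutely poextendable. The parenthetical unordered statement follows by the same argument, replacing ``poamalgamation'', ``poembeddable'' and ``poextension'' throughout by ``amalgamation'', ``weakly embeddable'' and ``extension'', and invoking the corresponding (monomorphism) half of Theorem~\ref{we15}. The only care required is to match the universal quantifier in the definition of a base with the one in the definition of absolute poextendability, and Theorem~\ref{we15} does all the real work.
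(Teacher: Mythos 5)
Your argument is correct and is exactly the intended derivation: the paper states this as an immediate corollary of Theorem~\ref{we15}, obtained by instantiating the definition of a weak poamalgamation base at an arbitrary pomonoid $S\supseteq U$ and applying the theorem pointwise. Nothing further is needed.
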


\section{Pounitary subpomonoids and amalgmation}
The concept of a {\em unitary subsemigroup} has been known to be related to the question of embeddability of amalgams since Howie's pioneering work in~\cite{howie-1962}. Gould and Shaheen~\cite{gould-2010} generalised this concept for posemigroups during their study of projective covers of pomonoids. We generalise this even further and provide a number of connections with amalgamation.

Let $U$ be a subpomonoid of the pomonoid $S$ and let $v, u, u_1, u'_1, \dots  u_n, u'_n \in U$, $s, s_1, s_2, \dots s_n \in S$. We shall say that
\begin{enumerate}
\item {\em $U$ is upper strongly right pounitary in $S$} (USRPU) if $v \le s u \Rightarrow s \in U$;
\item {\em $U$ is lower strongly right pounitary in $S$} (LSRPU) if $s u \le v \Rightarrow s \in U$;
\item {\em $U$ is strongly right pounitary in $S$} (SRPU) if ($v \le s u \text{ or } s u \le v) \Rightarrow s \in U$;
\item {\em $U$ is right pounitary in $S$} (RPU) if whenever there exists $n\ge 1$ such that
$$
u \le s_1 u_1, s_1 u'_1 \le s_2 u_2, \dots s_n u'_n \le v
$$
then $s_1 ,  s_2, \dots,  s_n \in U$;
\item {\em $U$ is right unitary in $S$} (RU) if $s u = v \Rightarrow s \in U$.
\end{enumerate}

Left-handed versions of these conditions are defined in a dual manner. If both the right and left handed version hold then we shall omit the adjective altogether. The implications represented by the following diagram are fairly clear.

$$
\begin{tikzpicture}[description/.style={fill=white,inner sep=2pt}]
\matrix (m) [matrix of math nodes, row sep=3em,
column sep=2.5em, text height=1.5ex, text depth=0.25ex]
{&RU&\\
&RPU&\\
LSRPU&  & USRPU \\
& SRPU=LSRPU \wedge USRPU&\\ };
\path[->]
(m-2-2) edge [double]  (m-1-2)
(m-3-1) edge [double] (m-2-2)
(m-3-3) edge [double] (m-2-2)
(m-4-2) edge [double] (m-3-1)
(m-4-2) edge [double] (m-3-3);
\end{tikzpicture}
$$

The implications are strict as the following examples demonstrate. In~\cite{gould-2010} it is shown that if a pomonoid $U$ is right unitary in $S$ then it need not be right pounitary in $S$.

Let $U = \lbrace 0, 1, 2, 3, \dots n \rbrace \subseteq (\mathbb{N}^{0}, \max)$. Then clearly $U$ is lower strongly pounitary in $\mathbb{N}^0$ but not upper strongly pounitary in $\mathbb{N}^0$. 

Now suppose that $U = \lbrace 1, e, f \rbrace$ is a subpomonoid of a pomonoid $S$  

\begin{center}
\begin{tabular}{ l | c c c  c c r }
$S$ &  $a$ & $f$ & $b$  & $e$ &  $1$   \\ \hline
$a$ &  $a$ & $a$ & $a$  & $a$ &  $a$ \\
$f$  &  $a$ & $f$ & $b$  & $f$ &  $f$  \\ 
$b$ &  $b$ & $b$ & $b$  & $b$ &  $b$  \\
$e$ &  $a$ & $f$ & $b$  & $e$ &  $e$ \\
$1$ &  $a$ & $f$ & $b$  & $e$ &  $1$ \\
\end{tabular}, \text{ } with order\text{ }  \begin{tabular}{| r c l |  }
\hline	
& $b$   &   \\
$/$ & $\mid$  & $\setminus$ \\ 
$1$ & $e$ & $f$\\
$\setminus$ & $\mid$  & $/$ \\ 
& $a$   &   \\
\hline  
\end{tabular} \\
\end{center}

then it is easy to check that $U$ is right pounitary in $S$ but since $1\le be$ and $ae\le 1$, $U$ is neither upper or lower strongly right pounitary in $S$.

\medskip

Notice that if $U$ is strongly right pounitary in $S$ then $S\setminus U$ is a right $U-$poset and $S$ is the coproduct in the category of right $U-$posets of $U$ and $S\setminus U$. In other words, within the category right $U-$posets, $U$ is a direct summand of $S$. This is exactly the situation in the unordered context (see for example~\cite{renshaw-2002}), and so for this reason we feel that the strongly right pounitary property is a very natural analogue for the right unitary property within the category of posets over pomonoids.

\bigskip

Let $X$ be a left $U-$poset and suppose that $1 \otimes x \le 1 \otimes x'$ in $S\otimes_U X$. Then there exists $n\ge 1$ and $x_2, \dots, x_n \in X, s_1, \dots, s_n \in S$, and $u_1, \dots, u_n, v_1, \dots, v_n \in U$ such that
\begin{align*}
1&  \le  s_1u_1&u_1x& \le v_1x_2 \\
s_1v_1 & \le s_2u_2 &  u_2x_2 &  \le v_2x_3\\
\vdots &  & \vdots\\
s_{n-1}v_{n-1} & \le s_nu_n & u_nx_n&  \le v_nx'\\
s_nv_n & \le 1.
\end{align*}

If $U$ is right pounitary in $S$ then $s_i\in U$ for $1\le i\le n$ and hence $x\le s_1u_1x\le s_1v_1x_2\le\ldots\le x'$ and so $U$ has the left extension property in $S$. Consequently we can deduce

\begin{theorem} \label{p2}
Let $U$ be a (left, right) pounitary subpomonoid of a pomonoid $S$. Then $U$ has the (right, left)  poextension property  in $S$. 
\end{theorem}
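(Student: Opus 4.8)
The plan is to recognise that the theorem merely packages the computation carried out in the paragraph immediately preceding it, together with its left--right dual, so the bulk of the work consists in reading off the two symmetric implications from the tensor-product order formula.

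First I would treat the implication ``right pounitary $\Rightarrow$ left poextension''. Take a left $U$-poset $X$ and suppose $1\otimes x\le 1\otimes x'$ in $S\otimes_U X$. Feeding this into the order characterisation of the tensor product yields the scheme whose left column reads $1\le s_1u_1$, $s_1v_1\le s_2u_2$, $\dots$, $s_{n-1}v_{n-1}\le s_nu_n$, $s_nv_n\le 1$ and whose right column reads $u_1x\le v_1x_2$, $\dots$, $u_nx_n\le v_nx'$. Setting the boundary terms $u=v=1\in U$ and identifying the $v_i$ with the primed elements, the left column is precisely an instance of the defining scheme of right pounitarity, whence $s_i\in U$ for every $i$. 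With every coefficient now lying in $U$, monotonicity of the $U$-action in each variable lets me splice the two columns into the single chain $x\le s_1u_1x\le s_1v_1x_2\le\cdots\le s_nv_nx'\le x'$, giving $x\le x'$ in $X$. Since the reverse implication $x\le x'\Rightarrow 1\otimes x\le 1\otimes x'$ is immediate from Lemma~\ref{order-lemma}, the map $x\mapsto 1\otimes x$ is an order embedding, i.e.\ $U$ has the left poextension property.

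The implication ``left pounitary $\Rightarrow$ right poextension'' I would then dispatch as the exact mirror of the above: start from a right $U$-poset $X$ and an inequality $x\otimes 1\le x'\otimes 1$ in $X\otimes_U S$, apply the order description of the tensor product in its symmetric form (with the $X$-entries on the left and the $S$-entries on the right), observe that the resulting column of $S$-inequalities is an instance of the defining scheme of left pounitarity, which forces all the $S$-coefficients into $U$, and then chain the inequalities via the action axioms to conclude $x\le x'$. In the write-up this half is summarised by ``by a dual argument''.

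I do not expect any genuine obstacle: both halves are direct substitutions into the tensor-product order formula followed by a single application of the pounitarity hypothesis, with no new idea beyond the displayed computation. The only point deserving care is the bookkeeping in the first paragraph---correctly matching the tensor scheme against the defining scheme of pounitarity (in particular pinning down $u=v=1$ as the boundary terms), and then verifying that once the $s_i$ are known to lie in $U$ the action axioms really do permit the concatenation of the two columns into one order chain from $x$ to $x'$.
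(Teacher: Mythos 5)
Your proposal is correct and follows essentially the same route as the paper: the authors prove the right-pounitary case by exactly the computation you describe (extracting the tensor-product order scheme for $1\otimes x\le 1\otimes x'$, applying the pounitarity hypothesis to the left column to force $s_1,\dots,s_n\in U$, and splicing the columns into the chain $x\le s_1u_1x\le s_1v_1x_2\le\cdots\le x'$), stated as the paragraph immediately preceding the theorem, with the other case left as the dual. The only detail worth keeping explicit in your write-up is the (trivial) converse direction $x\le x'\Rightarrow 1\otimes x\le 1\otimes x'$ needed to conclude that the map is an order embedding rather than merely order-reflecting, which you have already noted.
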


Let $f: X \to Y$ be a $U-$order embedding. Then, we can extend the pounitary concepts as follows.
Let  $u, v, u_1, u'_1, \dots  u_n, u'_n \in U, y, y_1, y_2, \dots y_n \in Y, x\in X$. Then we say that

\begin{enumerate}
\item $f$ is {\em upper strongly right pounitary} if $f(x) \le y u \Rightarrow y \in \text{im }f$;
\item $f$ is {\em lower strongly right pounitary} if $y u \le f(x) \Rightarrow y \in \text{im }f$;
\item $f$ is {\em strongly right pounitary} if ($ f(x) \le y u$  or $y u \le f(x)) \Rightarrow y \in \text{im }f$;
\item $f$ is {\em right pounitary} if $f(x) \le y_1 u_1, y_1 u'_1 \le y_2 u_2, \dots y_n u'_n \le f(x') \Rightarrow  y_1,  y_2, \dots,  y_n \in \text{im }f$;
\item $f$ is {\em right unitary} if $ y u = f(x) \Rightarrow y \in \text{im }f$.
\end{enumerate}

The left-handed versions of these properties can be defined dually. Notice that if $f:X\to Y$ is right (left) pounitary and if $f(x)\le y\le f(x')$ with $x,x'\in X, y\in Y$ then clearly $y\in\im(f)$ and so $f$ is convex.

\medskip

The following statement is easy to prove and will be used later.

\begin{lemma} \label{r2}
Let $f: X \to Y$ be a lower strongly right pounitary $U-$poset morphism and $A$ a left $U-$poset. Then if $y \otimes a \le f(x) \otimes a'$ in $Y\otimes_U A$ then $y\in\im(f)$.
\end{lemma}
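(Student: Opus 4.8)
The plan is to unpack the explicit description of the order on the tensor product $Y\otimes_U A$ given just before Lemma~\ref{associative-lemma} (taking $Y$ in the role of the right $U$-poset and $A$ in the role of the left $U$-poset), and then to feed the resulting chain of inequalities that live in $Y$ into the lower strongly right pounitary hypothesis, working upward from the bottom of the scheme.

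First I would apply that description to $y\otimes a\le f(x)\otimes a'$. This produces some $n\ge 1$, elements $y_1,\dots,y_n\in Y$ and $a_2,\dots,a_n\in A$, and scalars $s_1,\dots,s_n,t_1,\dots,t_n\in U$ for which the left-hand column of the scheme reads
\begin{align*}
y&\le y_1s_1,\\
y_1t_1&\le y_2s_2,\\
&\ \vdots\\
y_{n-1}t_{n-1}&\le y_ns_n,\\
y_nt_n&\le f(x).
\end{align*}
The right-hand column, involving the $a_i$, will play no role whatsoever; this is the reason no unitary-type hypothesis on $A$ is needed.

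The key observation is that this left column can be processed from the bottom up. The bottom relation $y_nt_n\le f(x)$ has $t_n\in U$, so lower strong right pounitarity of $f$ gives $y_n\in\im f$, say $y_n=f(x_n)$. For the inductive step, assume $y_k=f(x_k)$ has been established. Since $f$ is a $U$-poset morphism and $s_k\in U$, we have $y_ks_k=f(x_k)s_k=f(x_ks_k)$, so the relation $y_{k-1}t_{k-1}\le y_ks_k$ becomes $y_{k-1}t_{k-1}\le f(x_ks_k)$ with $t_{k-1}\in U$; lower strong right pounitarity again yields $y_{k-1}\in\im f$. Iterating up to $k=1$ gives $y_1=f(x_1)$, and then the topmost inequality reads $y\cdot 1\le y_1s_1=f(x_1s_1)$, whence $y\in\im f$ upon taking $u=1\in U$ in the pounitary condition.

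I do not anticipate a genuine obstacle: the whole argument is a short finite induction once the tensor-order scheme is written down. The only points requiring a little care are that the right-hand column of the scheme is irrelevant, that the morphism property of $f$ is exactly what allows each $s_k$ to be absorbed into the argument of $f$ at the inductive step, and that the very top inequality carries no element of $U$ acting on $y$, which is handled by invoking the identity $1\in U$.
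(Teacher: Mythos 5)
Your argument is correct and is precisely the one the paper has in mind (the paper states Lemma~\ref{r2} without proof, calling it ``easy to prove''): you unpack the tensor-product order scheme over $U$ and bootstrap the lower strongly right pounitary condition up the left-hand column, exactly as the authors do in the discussion preceding Theorem~\ref{p2} and in the proof of Lemma~\ref{q2}. The points you flag --- that the right-hand column is irrelevant, that $f(x_k)s_k=f(x_ks_k)$ absorbs each scalar, and that the top inequality is handled via $u=1\in U$ --- are exactly the details the paper suppresses, and they all check out.
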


\begin{proof}
Suppose that $y\otimes a\le f(x)\otimes a'$ in $Y\otimes_U A$. Then there exists $n\ge 1$ and $y_1,\ldots, y_n\in Y, a_2,\ldots, a_n\in A$ and $u_1,\ldots, u_n, v_1, \ldots, v_n\in U$ such that

\begin{align*}
y& \le y_{1}u_{1} &   u_{1} a&  \le v_{1} a_{2}\\
y_{1}v_{1} & \le y_{2}u_{2} &  u_{2}a_{2} &  \le v_{2} a_{3}\\
\vdots &  & \vdots\\
y_{n-1}v_{n-1} & \le y_nu_n & u_{n}a_{n}&  \le v_{n} a'\\
y_{n}v_{n} & \le f(x).
\end{align*}
Since $f$ is lower strongly right unitary, $y_n\in \im(f)$. Consequently $y_{n-1}\in\im(f)$ and continuing in this fashion we see that $y\in \im(f)$.
\end{proof}

\begin{lemma} \label{q2}
 Let $U$ be a submonoid of a pomonoid $S$ and let $f: X \to Y$  be a right pounitary $U-$poset morphism. Then the induced map $f\otimes1:X\otimes_US\to Y\otimes_US$ is an order embedding.
\end{lemma}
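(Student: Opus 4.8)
The map $f\otimes1$ is automatically monotone, so the only thing to establish is the reverse implication: if $f(x)\otimes s\le f(x')\otimes s'$ in $Y\otimes_US$, then $x\otimes s\le x'\otimes s'$ in $X\otimes_US$ (recall that every element of a tensor product is a simple tensor, so it suffices to compare tensors of this shape). The plan is to write down the scheme furnished by the order description of $Y\otimes_US$ recalled earlier, to recognise its left-hand column as an instance of the right pounitary hypothesis, and then to transport the entire scheme back to $X$ using that $f$ is a $U$-order embedding.

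So I would begin by supposing $f(x)\otimes s\le f(x')\otimes s'$ in $Y\otimes_US$. Applying the order description of the tensor product (with the right $U$-poset $Y$, the left $U$-poset $S$, tensored over $U$), there exist $n\ge1$, elements $y_1,\ldots,y_n\in Y$, $c_2,\ldots,c_n\in S$, and scalars $p_1,\ldots,p_n,q_1,\ldots,q_n\in U$ such that
\begin{align*}
f(x)&\le y_1p_1 & p_1s&\le q_1c_2\\
y_1q_1&\le y_2p_2 & p_2c_2&\le q_2c_3\\
\vdots &  & \vdots\\
y_{n-1}q_{n-1}&\le y_np_n & p_nc_n&\le q_ns'\\
y_nq_n&\le f(x').
\end{align*}
The first step is to observe that the left-hand column, namely $f(x)\le y_1p_1,\ y_1q_1\le y_2p_2,\ldots,\ y_nq_n\le f(x')$, is precisely a chain of the form occurring in the definition of a right pounitary morphism. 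Since $f$ is right pounitary, this forces $y_1,\ldots,y_n\in\im f$, so I may write $y_i=f(x_i)$ for suitable $x_i\in X$.

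The second step is to pull the left-hand column back to $X$. Because $f$ is a $U$-poset morphism we have $y_ip_i=f(x_i)p_i=f(x_ip_i)$ and $y_iq_i=f(x_iq_i)$, and because $f$ is in addition an order embedding, each inequality of the form $f(x_iq_i)\le f(x_{i+1}p_{i+1})$, together with the two end inequalities, may be read back as $x_iq_i\le x_{i+1}p_{i+1}$ in $X$. This yields, in $X$,
$$
x\le x_1p_1,\quad x_1q_1\le x_2p_2,\quad\ldots,\quad x_nq_n\le x'.
$$
Leaving the right-hand column (which lives entirely in $S$, with the same elements $c_2,\ldots,c_n$) untouched, these two columns together are exactly the scheme witnessing $x\otimes s\le x'\otimes s'$ in $X\otimes_US$, which completes the argument.

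The whole argument is a matching of two instances of the tensor order description, so I do not expect a serious obstacle. The one point that requires care is the bookkeeping: verifying that the $U$-scalars $p_i,q_i$ thrown up by the tensor inequality in $Y$ are exactly the scalars demanded by the right pounitary definition (so that the intermediate factors do land in $\im f$), and that applying $f$ as an order embedding to pull the $Y$-column back to an $X$-column preserves these same scalars and reuses the same intermediate elements $c_2,\ldots,c_n\in S$, so that the reassembled scheme is legitimate.
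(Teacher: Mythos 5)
Your proposal is correct and follows essentially the same route as the paper's proof: unfold the tensor-order scheme for $f(x)\otimes s\le f(x')\otimes s'$, recognise its left column as an instance of the right pounitary condition to get $y_i=f(x_i)$, and use that $f$ is a $U$-order embedding to pull the left column back to $X$ while reusing the right column unchanged. The bookkeeping concern you flag is unproblematic, since the tensor is over $U$ and so the scalars produced by the order description already lie in $U$, exactly as the pounitary definition requires.
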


\begin{proof}
Suppose that $f(x)\otimes s\le f(x')\otimes s'$ in $Y\otimes_U S$ so that we have a scheme
\begin{align*}
f(x)&  \le  y_1u_1&u_1s& \le v_1s_2 \\
y_1v_1 & \le y_2u_2 &  u_2s_2 &  \le v_2s_3\\
\vdots &  & \vdots\\
y_{n-1}v_{n-1} & \le y_nu_n & u_ns_n&  \le v_ns'\\
y_nv_n & \le f(x').
\end{align*}
Since $f$ is pounitary, there exists $x_i\in X, 1\le i\le n$ such that $y_i=f(x_i)$ and since $f$ is right pounitary and hence an order embedding, we have a scheme
\begin{align*}
x&  \le  x_1u_1&u_1s& \le v_1s_2 \\
x_1v_1 & \le x_2u_2 &  u_2s_2 &  \le v_2s_3\\
\vdots &  & \vdots\\
x_{n-1}v_{n-1} & \le x_nu_n & u_ns_n&  \le v_ns'\\
x_nv_n & \le x'
\end{align*}
and so $x\otimes s\le x'\otimes s'$ in $X\otimes_US$.
\end{proof}

That many of these unitary properties are preserved under direct limits is demonstrated by the following result.

\begin{lemma} \label{hj17}
Let $(X_i, \varphi^i_j)$ be directed system of $U-$posets with direct limit $(X, \varphi_i)$. Then, $\varphi_i$ is right (upper, lower) strongly  pounitary  if and only if $\varphi^i_j$ is right (upper, lower) strongly  pounitary, where $i \le j$.
\end{lemma}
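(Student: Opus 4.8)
The plan is to prove the two implications separately, treating the upper and lower cases in parallel (the strongly pounitary case is just their conjunction, since \emph{strongly} pounitary means both the upper and lower conditions hold, and the lower case reads off the upper one by reversing every inequality). The only tools needed are the order criterion for direct limits, Lemma~\ref{direct-limit-lemma}(1), together with the standard fact that, because $I$ is directed, every element of $X$ can be written as $\varphi_k(x_k)$ with the index $k$ chosen as large as we please, in particular $k\ge i$. I would also record at the outset that, since each pounitary map is in particular an order embedding, Lemma~\ref{direct-limit-lemma}(3) shows that $\varphi_i$ is an order embedding precisely when the $\varphi^i_k$ are; so the order-embedding part of the statement is immediate and only the pounitary condition itself needs transferring.

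For the implication ``$\varphi^i_j$ pounitary for all $j\ge i$ $\Rightarrow$ $\varphi_i$ pounitary'' (upper case) I would argue as follows. Suppose $\varphi_i(x_i)\le wu$ in $X$ with $x_i\in X_i$, $w\in X$, $u\in U$. Write $w=\varphi_k(x_k)$ with $k\ge i$; then $\varphi_k(\varphi^i_k(x_i))\le\varphi_k(x_k u)$, so by Lemma~\ref{direct-limit-lemma}(1) there is $l\ge k$ with $\varphi^i_l(x_i)\le\varphi^k_l(x_k)\,u$ in $X_l$. Since $\varphi^i_l$ is upper strongly pounitary, $\varphi^k_l(x_k)=\varphi^i_l(b)$ for some $b\in X_i$, whence $w=\varphi_l(\varphi^k_l(x_k))=\varphi_i(b)\in\im\varphi_i$. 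This direction goes through directly from the given hypotheses, using only the pounitarity of the maps with source $i$.

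For the converse ``$\varphi_i$ pounitary $\Rightarrow$ $\varphi^i_j$ pounitary'' (upper case): given $\varphi^i_j(x_i)\le x_j u$ in $X_j$, apply the monotone $U$-morphism $\varphi_j$ to get $\varphi_i(x_i)\le\varphi_j(x_j)\,u$ in $X$; pounitarity of $\varphi_i$ then yields $\varphi_j(x_j)=\varphi_i(a)=\varphi_j(\varphi^i_j(a))$ for some $a\in X_i$. To finish I must deduce $x_j=\varphi^i_j(a)$, i.e.\ cancel $\varphi_j$, and this is the crux of the proof: it requires $\varphi_j$ to be one-to-one. This is exactly where the order-embedding setting is used --- with the transition maps order embeddings, Lemma~\ref{direct-limit-lemma}(3) makes each $\varphi_j$ an order embedding and in particular injective, so $x_j=\varphi^i_j(a)\in\im\varphi^i_j$ as required. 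I expect this cancellation to be the main obstacle: without injectivity of the cocone maps the implication genuinely fails (further up the system one could collapse an element lying above $\im\varphi^i_j$), so the argument depends essentially on the maps of the system being order embeddings rather than merely $U$-poset morphisms.

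Finally, the lower strongly pounitary case is obtained from the same two arguments with every ``$\le$'' reversed, and the strongly pounitary case follows by combining the upper and lower statements.
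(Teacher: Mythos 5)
Your proof is correct and takes essentially the same route as the paper's: the forward direction transfers the inequality $\varphi_i(x_i)\le \varphi_k(x_k)u$ to a common upper index via Lemma~\ref{direct-limit-lemma}(1) and applies pounitarity of the transition map there, while the converse applies $\varphi_j$ and then cancels it using that $\varphi_j$ is an order embedding. The only difference is that you explicitly justify that cancellation step (via Lemma~\ref{direct-limit-lemma}(3) and the standing convention that pounitary maps are order embeddings), whereas the paper simply asserts that $\varphi_j$ is an order embedding.
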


\begin{proof}
We will prove this result for upper strongly pounitary, the case for lower strongly pounitary is similar.   Suppose that $\varphi^i_j$ is right upper strongly  pounitary and $ \varphi_i(x_i) \le \varphi_j(x_j)u$. Then, from Lemma~\ref{direct-limit-lemma} there exists $k \ge i, j$ such that $\varphi_k^i(x_i)  \le \varphi_k^j(x_j)u$. Since $\varphi_k^i$ is right upper strongly  pounitary, there exists $x'_i \in X_i$ such that $\varphi_k^i(x'_i) = \varphi_k^j(x_j)$. Hence, again from Lemma~\ref{direct-limit-lemma} $\varphi_i(x'_i) = \varphi_j(x_j)$. Therefore, $\varphi_i$ is right upper strongly  pounitary.

\smallskip

Conversely, suppose that $\varphi_i$ is right upper strongly  pounitary, and suppose that $\varphi_j^i(x_i)$ $\le$ $ x_j s$. Then, $\varphi_i(x_i) = \varphi_j \varphi_j^i(x_i)\le\varphi_j(x_j u) = \varphi_j(x_j) u$. Since $\varphi_i$ is a upper right strongly  pounitary, there exists $x'_i \in X_i$ such that $\varphi_j(x_j) = \varphi_i(x'_i) = \varphi_j \varphi_j^i(x'_i)$. Hence, $x_j = \varphi_j^i(x'_i)$, since $\varphi_j$ is also an order embedding. This completes the proof.
\end{proof}

\begin{theorem} \label{w2}
Let $U$ be a strongly pounitary subpomonoid of a pomonoid $S$. Then for every $(U,S)-$poset $X$ and every $(U, U)-$poset $Y$ and every strongly pounitary $f: X \to Y$  there exist a $(U,S)-$poset order embedding  $h: X \to F(S;X,Y)$ and a $(U, U)-$strongly  pounitary order embedding $g: Y \to F(S;X,Y)$ such that $g \circ f = h$.
\end{theorem}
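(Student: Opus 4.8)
The plan is to realise $F=F(S;X,Y)$ concretely as $(Y\otimes S)/\rho$ as in Theorem~\ref{b1}, with $\rho=\nu(R)$ and $R=\{(f(x)\otimes s,f(x')\otimes s'):xs\le x's'\}$, and to set $g(y)=(y\otimes1)\rho$ and $h=gf$, so that $g\circ f=h$ holds by construction. Since $f$ respects both $U$-actions and $R$ is stable under the left action (using $uf(x)=f(ux)$ and monotonicity), $\rho$ is a $(U,U)$-congruence, $F$ is a $(U,U)$-poset and $g$ is a $(U,U)$-morphism. To see that $h$ is an order embedding I would note that a strongly pounitary $f$ is in particular pounitary, so Lemma~\ref{q2} makes $f\otimes1\colon X\otimes S\to Y\otimes S$ an order embedding; feeding this into the pushout presentation of Lemma~\ref{b2} and applying Lemma~\ref{bad}(3) shows that the induced map $X\to F$, which is exactly $h$, is an order embedding.

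The heart of the argument is a dichotomy for the order on $F$. Unwinding $(y_1\otimes s_1)\rho\le(y_2\otimes s_2)\rho$ as $y_1\otimes s_1\le_{\alpha(R)}y_2\otimes s_2$ yields a chain alternating tensor-order steps in $Y\otimes S$ with $\alpha(R)$-steps; after merging adjacent order steps we may assume every $\alpha(R)$-step is non-trivial, and then each such step has both endpoints of the form $f(p)\otimes\tau\in(\im f)\otimes S$. Hence either (i) no $\alpha(R)$-step survives and $y_1\otimes s_1\le y_2\otimes s_2$ already holds in $Y\otimes S$; or (ii) the first surviving step forces $y_1\otimes s_1\le f(p)\otimes\tau$ and the last forces $f(p')\otimes\tau'\le y_2\otimes s_2$, so that $y_1,y_2\in\im f$ by Lemma~\ref{r2} and its upper dual.

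To prove $g$ is an order embedding I apply this to $y\otimes1\le_{\alpha(R)}y'\otimes1$. In case (ii) write $y=f(x)$, $y'=f(x')$; then $g(y)\le g(y')$ reads $h(x)\le h(x')$, and the embedding property of $h$ gives $x\le x'$, hence $y\le y'$. In case (i) we have $y\otimes1\le y'\otimes1$ in $Y\otimes S$, and since $U$ is pounitary it has the right poextension property (Theorem~\ref{p2}), i.e. $y\mapsto y\otimes1$ is an order embedding, so $y\le y'$. The reverse implication is just monotonicity (Lemma~\ref{free-extension-order-lemma}).

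Finally, for the $(U,U)$-strong pounitarity of $g$ I would treat the lower strongly right case as a model: given $wu\le g(y)$ with $w=(y'\otimes s)\rho$, apply the dichotomy to $(y'\otimes su)\rho\le(y\otimes1)\rho$. In case (ii), $y'=f(x')$ and then $w=(f(x')\otimes s)\rho=h(x')s=h(x's)=g(f(x's))\in\im g$. In case (i), $y'\otimes su\le y\otimes1$ in $Y\otimes S$; reading the defining scheme and repeatedly invoking that $U$ is lower strongly \emph{left} pounitary forces each right-hand factor into $U$ and yields $su\in U$. The decisive remaining step is to pass from $su\in U$ to $s\in U$: this is precisely where the strong hypothesis is indispensable, since $su\le su\in U$ together with $U$ being strongly right pounitary forces $s\in U$, whence $w=g(y's)\in\im g$. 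The upper case and the two left-handed cases are dual, using the upper analogue of Lemma~\ref{r2}, the dual poextension property, and the strong left pounitarity of both $f$ and $U$. I expect the bookkeeping of case (i) — peeling the tensor scheme and, above all, the implication $su\in U\Rightarrow s\in U$, which genuinely needs \emph{strong} rather than plain pounitarity — to be the main obstacle.
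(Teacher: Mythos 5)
Your proposal is correct and follows essentially the same route as the paper's proof: the same minimality/dichotomy analysis of the $\alpha(R)$-chain, Lemma~\ref{r2} and its dual to force the relevant elements into $\im(f)$, the poextension property of Theorem~\ref{p2} for the trivial case, and the key observation that $su\in U$ forces $s\in U$ by strong pounitarity of $U$ in $S$. The only (minor, and welcome) variation is that you first establish that $h$ is an order embedding via Lemma~\ref{q2}, Lemma~\ref{b2} and Lemma~\ref{bad}(3) and then use $g(f(x))=h(x)$ to shortcut the non-trivial case, where the paper instead pulls the entire chain back into $X$ using the fact that $f\otimes 1$ is an order embedding.
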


\begin{proof} From Theorem \ref{p2},  $U$ has the poextension property in $S$ and  from Lemma \ref{q2}, $f\otimes1:X\otimes_US\to Y\otimes_US$ is an order embedding.
Let the maps $g: Y \to F(S; X, Y)$ and $h: X \to F(S; X, Y)$ be defined  as in Theorem \ref{b1} and suppose that $g(y) \le  g(y')$. Then,  $(y \otimes 1) \sigma \le (y' \otimes 1) \sigma$ and so  $y \otimes 1 \le_{\alpha(R)} y' \otimes 1$. Hence there exists $n\ge 0, s_{i}, s'_{i} \in S$, and $y_{i}, y'_{i} \in Y, 0\le i\le n$ such that
$$
y \otimes 1 \le y_{1} \otimes s_{1} \alpha(R)  y'_{1} \otimes s'_{1} \le  \dots \le y_{n} \otimes s_{n} \alpha(R)  y'_{n} \otimes s'_{n} \le y' \otimes 1.
$$
We can assume that the number of $\alpha(R)$ terms is minimal. If there are no such terms then $y\otimes1\le y'\otimes1$ in $Y\otimes_US$ and so $y\le y'$ since $U$ has the poextension property in $S$. Otherwise for each  $i$ there exists a scheme
\begin{align*}
y_{i} \otimes s_{i}& = y_{i1} \otimes s_{i1} t_{1}  \\
y'_{i1} \otimes s'_{i1} t_{1}&  = y_{i2} \otimes s_{i2} t_{2} \\
&\vdots\tag{$\ast\ast$}\\
y'_{im} \otimes s'_{im} t_{m} & = y'_{i} \otimes s'_{i}
\end{align*}
where $s_{ij}, s'_{ij} \in S$,  $y_{ij}, y'_{ij} \in Y$, $t_{j} \in U$ and ($y_{ij} \otimes s_{ij}$, $y'_{ij} \otimes s'_{ij}$) $\in R$. From the definition of $R$, $y_{ij} \otimes s_{ij}$ = $f(x_{ij}) \otimes r_{ij}$, $y'_{ij} \otimes s'_{ij}$ = $f(x'_{ij}) \otimes r'_{ij}$, and $x_{ij}r_{ij} \le x'_{ij}r'_{ij}$. Hence $x_{ij}r_{ij}t_{j} \le x'_{ij}r'_{ij}t_{j}$ and so $x'_{ij} \otimes r'_{ij}t_{j} = x_{i(j+1)} \otimes r_{i(j+1)}t_{j+1}$ since $f\otimes1$ is an order embedding. Because the canonical map $X\otimes_US\to X$ is monotone, we deduce
$$
x_{i1} r_{i1} t_1 \le  x'_{i1} r'_{i1} t'_i = x_{i2} s_{i2} t_2\le  \dots \le  x_{im} r_{im} t_m.
$$

Since $f$ is  strongly pounitary, we have $y_i = f(x_i)$ and $y'_i = f(x'_i)$ for some $x_i, x'_i\in X$ and hence
$$
x_i s_i \le  x_{i1} r_{i1} t_1 \le  x'_{i1} r'_{i1} t'_i = x_{i2} r_{i2} t_2\le  \dots \le  x_{im} r_{im} t_m \le x'_i s'_i.
$$ 

By Lemma~\ref{r2} and its dual it follows that there exists $x,x'\in X$ such that $ y=f(x), y'=f(x')$. Consequently $y \otimes 1 =f(x)\otimes1\le f(x_1)\otimes s_1 = y_1 \otimes s_1$, and since $f\otimes1$ is an order embedding that $x\le x_1s_1$. In a similar way $x'_ns'_n\le x'$ and so $ x \le x_1 s_{1} \le  x'_{1} s'_{1}  \le \dots \le x'_n s'_n \le x'$ from which it follows that $g$ is an order embedding. 

\smallskip

We next show that $g$ is upper strongly pounitary (that $g$ is lower strongly pounitary follows from a similar argument). Suppose that  $(y \otimes 1) \sigma \le (y' \otimes s) \sigma u = (y' \otimes su) \sigma $ so that $y \otimes 1 \le_{\alpha(R)} y' \otimes su$. Hence there exists $n\ge 1$ and $s_{i}, s'_{i} \in S$, and $y_{i}, y'_{i} \in Y, 1\le i\le n$ such that
$$
y \otimes 1 \le y_{1} \otimes s_{1} \alpha(R)  y'_{1} \otimes s'_{1} \le \dots \le y_{n} \otimes s_{n} \alpha(R)  y'_{n} \otimes s'_{n} \le y' \otimes su.
$$
As before, assume that the number of $\alpha(R)$ terms is minimal. If there are no such terms then we deduce that
$ y \otimes 1 \le y' \otimes su$. By the dual of Lemma~\ref{r2} it follows that $su\in U$ and so $s\in U$. Hence, $(y' \otimes s) \sigma = (y's \otimes1) \sigma = g(y's)$.
Then, again using Lemma~\ref{r2} and its dual we deduce that there exists $x,x'\in X$ with $y = f(x), y' = f(x')$. Consequently
$(y'\otimes s)\sigma = (f(x')\otimes s)\sigma = (f(x's)\otimes1)\sigma = g(f(x's))$ and this shows that $g$ is upper strongly pounitary as required. Notice that $h=g\circ f$ is a $(U,S)-$poset map by Theorem \ref{b1} and is clearly an order-embedding.
\end{proof}

In the above result, the left $U-$poset structure appears to play no role, but it is intrinsically used in the main result of this section, which we are now in a position to state and prove.

\begin{theorem} \label{mo1}
Let $[U; S_1, S_2]$ be a pomonoid  amalgam. If  $U$ is strongly pounitary in both $S_1$ and $S_2$ then the amalgam is strongly poembeddable and $U$ has the poextension property in $S_1 \ast_U S_2$.
\end{theorem}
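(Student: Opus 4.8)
The plan is to exploit the direct-system description of $S_1\ast_US_2$ as the direct limit of $(Y_n,k_n)$ (Theorem~\ref{direct-limit-theorem}) and to drive everything from Theorem~\ref{w2}. The engine is the claim that \emph{each $k_n:Y_n\to Y_{n+1}$ is a strongly pounitary order embedding}. Indeed $Y_n=F(S_i;Y_{n-2},Y_{n-1})$ with $i\equiv n\pmod 2$, and since $n-2\equiv n\pmod 2$ the poset $Y_{n-2}$ is a $(U,S_i)$-poset; thus each inductive step is literally an instance of Theorem~\ref{w2} with $S=S_i$, $X=Y_{n-2}$, $Y=Y_{n-1}$, $f=k_{n-2}$. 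Given $k_{n-2}$ strongly pounitary (and $U$ strongly pounitary in $S_i$, which is a hypothesis), Theorem~\ref{w2} returns $g=k_{n-1}$ as a strongly pounitary order embedding and $h=k_{n-1}k_{n-2}$ as an order embedding, so the property propagates.

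The base case is the one genuinely new ingredient, and I expect it to be the main obstacle. I would isolate a sub-lemma: \emph{if $U$ is strongly pounitary in $S$ then the canonical map $\iota:A\to A\otimes_U S$, $a\mapsto a\otimes 1$, is strongly pounitary} (and dually $B\to S\otimes_U B$). This is where the full two-sided hypothesis is spent. For the right-handed upper case one starts from $a\otimes 1\le(a'\otimes s)u$ and writes the defining tensor scheme; the inequalities $u_k\sigma_k\le v_k\sigma_{k+1}$ among the $S$-entries, read with the strongly \emph{left} pounitary conditions, force successively every $\sigma_k\in U$ and then $su\in U$, after which a single strongly \emph{right} pounitary step yields $s\in U$, so that $a'\otimes s=a's\otimes 1\in\im\iota$; the remaining cases are symmetric. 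Taking $A=S_1$, $S=S_2$ shows $k_1$ is strongly pounitary, while $h^1:S_2\to Y_2$ is an order embedding by the left poextension property of $U$ in $S_1$ (Theorem~\ref{p2}); this starts the induction.

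With the engine running, $k^n=k_n\cdots k_1$ is a strongly pounitary order embedding and $h^n=k_n\cdots k_2\,h^1$ is a composite of order embeddings, so Lemma~\ref{poamalgam-weak-embeddability-condition-lemma} gives poembeddability. For \emph{strong} poembeddability I would check Lemma~\ref{poamalgam-strong-embeddability-condition-lemma}: $\varphi_2$ is injective because all connecting maps are order embeddings (Lemma~\ref{direct-limit-lemma}), and from $s_1\otimes 1=1\otimes s_2$ in $Y_2$ the scheme for $s_1\otimes 1\le 1\otimes s_2$, chased against the strongly pounitary conditions in $S_1$, forces $s_1\in U$, and symmetrically $s_2\in U$; the tensor relation then rewrites these as $1\otimes s_1=1\otimes s_2$, whence injectivity of $h^1$ gives $s_1=s_2\in U$.

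Finally, the inclusion $U\hookrightarrow S:=S_1\ast_US_2$ factors as $U\hookrightarrow S_1\xrightarrow{\varphi_1}S$; the first map is strongly pounitary by hypothesis and $\varphi_1$ is strongly pounitary by Lemma~\ref{hj17} (every $k^1_k$ is), so, a composite of strongly pounitary order embeddings being again one, $U$ is strongly pounitary in $S$. The sub-lemma then makes $\iota:A\to A\otimes_U S$ strongly pounitary, and to obtain the poextension property I would take $a\otimes 1\otimes b\le a'\otimes 1\otimes b'$ in $(A\otimes_U S)\otimes_U B$, write its defining scheme, and use strong pounitarity of $\iota$ to show that every first-factor term lies in $\im\iota$; collapsing these through the order embedding $\iota$ leaves exactly a scheme witnessing $a\otimes b\le a'\otimes b'$ in $A\otimes_U B$, which is the required poextension property.
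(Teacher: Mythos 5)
Your proposal follows essentially the same route as the paper: establish that $k_1$ (and $h^1$) are strongly pounitary order embeddings via the two-sided strong pounitarity of $U$ in $S_1$ and $S_2$ (your base-case sub-lemma is the content of Lemma~\ref{r2} and its dual), propagate this along the direct system by Theorem~\ref{w2}, apply Lemmas~\ref{poamalgam-weak-embeddability-condition-lemma} and~\ref{poamalgam-strong-embeddability-condition-lemma} for weak and strong poembeddability, and transfer strong pounitarity to $S_1\ast_US_2$ via Lemma~\ref{hj17} to obtain the poextension property. The argument is correct; the only cosmetic difference is that you re-derive the poextension property from strong pounitarity at the end where the paper can simply invoke Theorem~\ref{p2}.
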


\begin{proof}
Construct the direct system $(Y_n, k_n)$ as in Theorem~\ref{direct-limit-theorem}. From Theorem \ref{p2}, $U$ has the poextension property in both $S_1$ and $S_2$ and hence, the map $S_1=Y_1 \to Y_2=S_1 \otimes S_2$ is an order embedding.  We next show that the map $k_1: Y_1 \to Y_2$ is strongly pounitary. Suppose that $k_1(s_1) \le y_2 u$ so that $s_1 \otimes 1 \le  s'_1 \otimes s_2u$ for some $s_1,s'_1\in S_1, s_2\in S_2, u \in U$. Since $U$ is strongly pounitary, and hence lower strongly pounitary, in $S_2$, by the dual of Lemma~\ref{r2} we can deduce that $s_2u\in \im(U\to S_2) = U$ and so $s_2 \in U$. Hence $y_2 =   s'_1 \otimes s_2 = s'_1 s_2 \otimes 1 = k_1(s'_1 s_2)$ as required. In a similar way the map $S_2\to Y_2$ is strongly pounitary. Since $Y_1=S_1$, it follows from Theorem~\ref{w2} that $Y_2\to Y_3$ is a strongly pounitary order embedding and so an inductive argument then allows us to deduce that for all $n\ge 1$, the map $k_n: Y_n \to Y_{n+1}$ is a strongly pounitary order embedding. Hence,  the amalgam is weakly poembeddable by Lemma~\ref{poamalgam-weak-embeddability-condition-lemma}.

To show that the amalgam is strongly poembeddable we use Lemma~\ref{poamalgam-strong-embeddability-condition-lemma}. Suppose therefore that $s_1 \otimes 1 = 1 \otimes s_2$ in $S_1 \otimes S_2$. Then there exists a scheme

\begin{align*}
s_1 & \le s_{11}u_{1} &    u_1 &  \le v_{1} s_{21}\\
s_{11}v_{1} & \le s_{12}u_{2} &  u_{2}s_{21} &  \le v_{2} s_{22}\\
\vdots & & \vdots \\
s_{1 m-1}v_{m-1} & \le  s_{1 m}u_m & u_{m}s_{2 m-1}& \le v_ms_2\\
s_{1 m}v_m & \le  1. &
\end{align*}

Since $U$ is  strongly pounitary in $S_1$, $s_1, s_{11}, \dots, s_{1 m} \in U$. Hence, $s_1 \otimes 1 \le 1 \otimes s_2$ in $U \otimes S_2\cong S_2$ and so $s_1\le s_2$. In a similar way, $s_2\le s_1$ and so $s_1 = s_2 \in U$ and the amalgam is strongly poembeddable.

It follows from Theorem~\ref{w2}, Theorem~\ref{direct-limit-theorem} and Lemma~\ref{hj17} that the order embedding $S_1\to S_1\ast_U S_2$ is strongly pounitary and since the composite of strongly pounitary order embeddings is also strongly pounitary, $U$ is strongly pounitary in $S_1\ast_US_2$.
\end{proof}

\section{Commutative pomonoid amalgams}

\begin{lemma} \label{tr}
Let $U$ be a subpomonoid  of a pomonoid $S$ and suppose that $U$ has the poextension (extension) property in $S$. If $\lambda: X \to Y$ is a convex $U-$poset morphism and if $y \otimes 1 = \lambda(x) \otimes s $ in $Y \otimes S$, then $y \in \im\lambda$.
\end {lemma}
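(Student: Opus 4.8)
The plan is to sandwich $y$ between two elements of $\im\lambda$ and then appeal to convexity. Concretely, since $\im\lambda$ is convex it suffices to produce $x_1,x_2\in X$ with $\lambda(x_1)\le y\le\lambda(x_2)$. So first I would split the hypothesised equality into the two relations $\lambda(x)\otimes s\le y\otimes1$ and $y\otimes1\le\lambda(x)\otimes s$ in $Y\otimes_U S$ and expand each by the order description of the tensor product recalled in the introduction, taking $A=Y$, $B=S$ and tensoring over $U$.

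Reading off the two boundary rows of these schemes gives honest inequalities in $Y$: the bottom row of the expansion of $\lambda(x)\otimes s\le y\otimes1$ has the shape $z\,t\le y$ with $z\in Y$, $t\in U$, while the top row of the expansion of $y\otimes1\le\lambda(x)\otimes s$ has the shape $y\le z'u'$ with $z'\in Y$, $u'\in U$. Thus already $z\,t\le y\le z'u'$, and the whole problem reduces to showing $z,z'\in\im\lambda$: for then $z\,t=\lambda(x_1)t=\lambda(x_1t)$ and $z'u'=\lambda(x_2u')$ lie in $\im\lambda$ (as $t,u'\in U$ and $\lambda$ is a $U$-poset morphism), and convexity of $\im\lambda$ forces $y\in\im\lambda$.

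The hypothesis on $U$ enters precisely when I try to pull $z$ and $z'$ into $\im\lambda$. Specialising the poextension property by taking the complementary tensor factor to be $U$ shows that the canonical map $Y\to Y\otimes_U S$, $w\mapsto w\otimes1$, is an order embedding, so pure comparisons in $Y\otimes_U S$ reflect back to $Y$; dually the $S$-side inequalities of each scheme may be transported into $Y\otimes_U S$ by Lemma~\ref{order-lemma}. Anchoring at the fixed endpoint $\lambda(x)\in\im\lambda$ and running along the chain of the scheme, alternately transporting the $S$-inequalities inward and reflecting the resulting pure inequalities back to $Y$, each intermediate element of $Y$ becomes trapped between two images of $\lambda$ and hence, by convexity, lies in $\im\lambda$; in particular the two boundary elements $z$ and $z'$ do. The extension (monomorphism) version runs identically with every $\le$ replaced by $=$ and the order embedding replaced by the induced monomorphism $Y\to Y\otimes_U S$.

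I expect this propagation to be the main obstacle. The tensor-order scheme introduces intermediate elements of $Y$ together with auxiliary elements of $S$ that are not a priori controlled, and the $U$-coefficients appearing on the two sides of consecutive rows do not match, so one cannot simply telescope the chain inside $Y$. Making the alternation between the reflection supplied by the poextension property and the convexity of $\im\lambda$ actually terminate at the two boundary elements $z,z'$ is the delicate bookkeeping on which the argument rests.
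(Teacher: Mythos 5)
Your reduction to producing $\lambda(x_1)\le y\le\lambda(x_2)$ and invoking convexity is sound in principle, and your reading of the boundary rows of the two schemes is correct (the middle-column coefficients do lie in $U$ since the tensor is over $U$, so $zt\le y\le z'u'$ with $t,u'\in U$). The genuine gap is the step you yourself flag as ``delicate bookkeeping'': showing that $z$ and $z'$ (and the intermediate $y_i$) lie in $\im\lambda$. The scheme for $y\otimes1\le\lambda(x)\otimes s$ only ever bounds the chain on \emph{one} side by an element of $\im\lambda$, namely at the terminal row $y_nv_n\le\lambda(x)$; there is no companion inequality of the form $\lambda(x'')\le y_iu_i$ anywhere, so no intermediate element is ever ``trapped between two images of $\lambda$'' and convexity never gets to fire. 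Worse, even if you could trap $y_iv_i$ between two elements of $\im\lambda$, convexity would only give $y_iv_i\in\im\lambda$, not $y_i\in\im\lambda$; passing from $y_iv_i\in\im\lambda$ to $y_i\in\im\lambda$ is precisely a unitary-type condition on $\lambda$ (compare the definition of a right unitary/pounitary morphism and Lemma~\ref{r2}), which is strictly stronger than the convexity hypothesized here. So the propagation cannot be made to terminate from the stated hypotheses, and reflecting pure inequalities back along $Y\to Y\otimes_US$ does not help, since order information in $Y$ says nothing about membership in $\im\lambda$.

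The paper's proof avoids the schemes entirely. It forms the pushout $P$ of $\lambda$ against itself, with legs $\alpha,\beta:Y\to P$; since tensoring preserves pushouts, the hypothesis $y\otimes1=\lambda(x)\otimes s$ gives $\alpha(y)\otimes1=\alpha\lambda(x)\otimes s=\beta\lambda(x)\otimes s=\beta(y)\otimes1$ in $P\otimes_US$; the extension property applied to the right $U$-poset $P$ then yields $\alpha(y)=\beta(y)$ in $P$; and Lemma~\ref{bad}(2) --- which is where convexity of $\lambda$ is actually consumed --- converts $\alpha(y)=\beta(y)$ into $y\in\im\lambda$. If you want to salvage your approach, you would essentially have to re-prove Lemma~\ref{bad}(2) by hand inside the coproduct $Y\dot\cup Y$, at which point you have reconstructed the paper's argument; I would recommend switching to the pushout route.
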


\begin{proof}
Suppose that $y \otimes 1 = \lambda(x) \otimes s $ in $Y \otimes_U S$ and consider the commutative diagram
$$
\begin{diagram}
\node{X} \arrow{e,t}{\lambda} \arrow{s,l}{\lambda} \node{Y} \arrow{s,r}{\alpha}  \\
\node{Y} \arrow{e,b}  {\beta} \node{P} 
\end{diagram}
$$

where $P$ is the pushout. It is known from \cite{bf-2005} that tensor products preserve pushouts in the category of $U-$posets so in $P \otimes_U S$ we have $\alpha(y) \otimes 1 =\alpha\lambda(x)\otimes s = \beta\lambda(x)\otimes s = \beta(y) \otimes 1$. Since $U$ has the extension property in $S$, $\alpha(y) =\beta(y) $ and hence from Lemma \ref{bad} there exist $x \in X$ such that $y = \lambda(x)$.
\end{proof}

From now on we assume that $S$ is a commutative pomonoid. It is well-known (see for example~\cite{sohail-2011}) that in the category of commutative pomonoids the amalgamated free product of a pomonoid amalgam $[U;S_1,S_2]$ reduces to the tensor product $S_1\otimes_US_2$ with multiplication given by $(s_1\otimes s_2)(s'_1\otimes s'_2) = s_1s'_1\otimes s_2s'_2$.

\begin{theorem} \label{p1}
Let $[U, S_1, S_2]$ be commutative pomonoid amalgam and $U$ has the poextension (resp. extension) property and is convex in both $S_1$ and $S_2$. Then the amalgam is strongly poembeddable (resp. embeddable).
\end{theorem}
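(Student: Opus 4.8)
The plan is to exploit the fact, recalled just above the statement, that for commutative pomonoids the amalgamated free product of $[U;S_1,S_2]$ is simply the tensor product $P=S_1\otimes_U S_2$, with canonical maps $\lambda_1\colon S_1\to P$, $s_1\mapsto s_1\otimes1$, and $\lambda_2\colon S_2\to P$, $s_2\mapsto 1\otimes s_2$. Strong poembeddability then amounts to two things: that each $\lambda_i$ is an order embedding (poembeddability), and that $\lambda_1(S_1)\cap\lambda_2(S_2)=\lambda_1\varphi_1(U)$ (the intersection condition). Throughout, the parenthetical ``extension/embeddable'' statement is handled by the identical argument with ``order embedding'' replaced by ``monomorphism'', since every result I invoke (the (po)extension property and Lemma~\ref{tr}) carries its own extension version.

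First I would dispose of poembeddability. The poextension property of $U$ in $S_2$, applied with $S_1$ as the nontrivial factor and $U$ as the trivial one, specialises to the assertion that $s_1\mapsto s_1\otimes1$ is an order embedding, so $\lambda_1$ is one; symmetrically the poextension property of $U$ in $S_1$ makes $\lambda_2$ an order embedding. Together with the tensor identity $\varphi_1(u)\otimes1=1\otimes\varphi_2(u)$, which gives $\lambda_1\varphi_1=\lambda_2\varphi_2$, this shows the amalgam is poembeddable. Moreover, since in the commutative case the direct system of Theorem~\ref{direct-limit-theorem} stabilises at $Y_2=S_1\otimes S_2$, the comparison map $\varphi_2\colon Y_2\to S_1\ast_U S_2$ is an isomorphism, hence one-to-one, so the hypotheses of Lemma~\ref{poamalgam-strong-embeddability-condition-lemma} are met.

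With these facts in hand, Lemma~\ref{poamalgam-strong-embeddability-condition-lemma} reduces strong poembeddability to the single implication: if $s_1\otimes1=1\otimes s_2$ in $S_1\otimes_U S_2$ then $s_1=s_2\in U$. This is the crux, and I would prove it using Lemma~\ref{tr}. Read the equation as $s_1\otimes1=\varphi_1(1)\otimes s_2$ in $S_1\otimes_U S_2$, viewed as $Y\otimes S$ with $Y=S_1$ and ambient pomonoid $S=S_2$, and take the convex $U$-poset morphism to be $\lambda=\varphi_1\colon U\to S_1$, which is convex precisely because $U$ is convex in $S_1$. Since $U$ has the poextension property in $S_2$, Lemma~\ref{tr} yields $s_1\in\im\varphi_1$, say $s_1=\varphi_1(u)$. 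The tensor identity then gives $1\otimes\varphi_2(u)=\varphi_1(u)\otimes1=s_1\otimes1=1\otimes s_2$, that is $\lambda_2\varphi_2(u)=\lambda_2(s_2)$; as $\lambda_2$ is injective we obtain $s_2=\varphi_2(u)$, whence $s_1=s_2=u\in U$ under the usual identifications.

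The step I expect to be delicate is the correct setup of Lemma~\ref{tr}: one must choose the ambient pomonoid to be $S_2$ (so that the poextension hypothesis applies on the correct side of the tensor), match $s_1\otimes1=1\otimes s_2$ to the template $y\otimes1=\lambda(x)\otimes s$ with $y=s_1$ and $\lambda=\varphi_1$, and recognise that it is exactly the convexity of $U$ in $S_1$ that makes $\varphi_1$ a convex morphism. It is worth noting that along this streamlined route the injectivity of $\lambda_2$ already delivers $s_2=\varphi_2(u)$ from $s_1\in\im\varphi_1$, so convexity of $U$ in $S_1$ suffices; the symmetric convexity in $S_2$ is nonetheless available and would give $s_2\in\im\varphi_2$ directly by a mirror application of Lemma~\ref{tr}. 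Everything else---the reduction to $S_1\otimes_U S_2$, the one-sided specialisations of the poextension property, and the tensor identity---is routine.
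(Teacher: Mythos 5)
Your proof is correct and follows essentially the same route as the paper: weak poembeddability from the one-sided specialisations of the poextension property, and the intersection condition via Lemma~\ref{tr} applied to the convex morphism $\varphi_1\colon U\to S_1$ inside $S_1\otimes_U S_2$ viewed as $Y\otimes S$ with $S=S_2$ (the paper leaves exactly this application implicit with ``it follows easily from Lemma~\ref{tr}''). One caveat: your detour through Lemma~\ref{poamalgam-strong-embeddability-condition-lemma} is both unnecessary and incorrectly justified --- the map $\varphi_2\colon Y_2\to S_1\ast_US_2$ lands in the amalgamated free product in the category of \emph{all} pomonoids, which does not collapse to $S_1\otimes_US_2$ even for commutative factors, so it is not an isomorphism; but since your crux paragraph verifies the intersection condition $\lambda_1(S_1)\cap\lambda_2(S_2)=\lambda_1\varphi_1(U)$ directly in the commutative free product $S_1\otimes_US_2$, that lemma is not needed and the proof stands.
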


\begin{proof}
Since $U$ has the poextension (resp. extension) property in $S_1$ and $S_2$, $\lambda_1: S_1 \to S_1 \otimes S_2$ and $\lambda_1: S_2 \to S_1 \otimes S_2$ are order embeddings (resp. monomorphisms) and so the commutative amalgam is weakly poembeddable (resp. embeddable) in its amalgamated free product. Now, suppose that $\lambda_1(s_1) = \lambda_2(s_2)$. It follows that $s_1\otimes1=1\otimes s_2$ in $S_1\otimes_U S_2$ and so from Lemma~\ref{tr} we can deduce that $s_1\in U$ and (dually) $s_2\in U$, and consequently $s_1=s_2 \in U$. Hence the amalgam is strongly poembedable (resp. embeddable).
\end{proof}

Since pounitary morphisms are convex, as a consequence we have

\begin{corollary}
Let $[U, S_1, S_2]$ be commutative pomonoid amalgam and $U$ be pounitary in $S_1$ and $S_2$. Then the amalgam is strongly poembeddable.
\end{corollary}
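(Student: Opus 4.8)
The plan is to derive the statement directly from Theorem~\ref{p1}, whose hypotheses are precisely that $U$ has the poextension property and is convex in each of $S_1$ and $S_2$. So the whole task reduces to checking that the pounitary assumption delivers both of these for $i\in\{1,2\}$, after which Theorem~\ref{p1} applies verbatim and yields strong poembeddability.

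First I would establish convexity. Since $U$ is pounitary in $S_i$ it is in particular right pounitary, and the observation recorded before Lemma~\ref{r2} (that a right pounitary morphism is convex) has an exact analogue for a right pounitary subpomonoid. Concretely, if $u\le s\le v$ with $u,v\in U$ and $s\in S_i$, then specialising the scheme defining the right pounitary condition to $n=1$ with $s_1=s$ and $u_1=u_1'=1$ gives $u\le s\cdot 1$ and $s\cdot 1\le v$, whence $s\in U$. Thus $U$ is convex in $S_i$.

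Next I would obtain the poextension property. By the convention that an unprefixed condition means both handednesses hold, $U$ being pounitary in $S_i$ means it is simultaneously left and right pounitary in $S_i$. Theorem~\ref{p2} then yields that $U$ has both the right and the left poextension property in $S_i$. With convexity and the poextension property now in hand for both $S_1$ and $S_2$, the hypotheses of Theorem~\ref{p1} are met and the conclusion that the amalgam is strongly poembeddable follows immediately.

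The one point I would treat as the main obstacle is reconciling conventions rather than proving anything substantial: Theorem~\ref{p2} produces the one-sided (left and right) poextension properties, whereas Theorem~\ref{p1} is phrased in terms of the poextension property. In the present commutative setting these match for the maps actually used — the embeddings $\lambda_i\colon S_i\to S_1\otimes_U S_2$ and the single appeal to Lemma~\ref{tr} inside the proof of Theorem~\ref{p1} invoke only the one-sided property — so no genuine gap arises. I would make this identification explicit rather than leave it implicit, since the left/right bookkeeping is the sole place where the argument could be mishandled.
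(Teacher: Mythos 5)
Your proposal is correct and follows essentially the same route as the paper: the paper's own justification is the one-line remark that pounitary implies convex, combined with Theorem~\ref{p2} for the poextension property, so that Theorem~\ref{p1} applies. Your explicit $n=1$ specialisation for convexity and your note that only the one-sided poextension properties are actually invoked inside Theorem~\ref{p1} merely make precise what the paper leaves implicit.
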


A pomonoid $S$ is said to be {\em left pocancellative} if $x\le y$ whenever $sx\le sy$. {\em Right pocancellativity} is defined dually and $S$ is {\em pocancellative} if it is right and left pocancellative.

\begin{theorem}
Let $[U; S_1, S_2]$ be  pocancellative commutative pomonoid amalgam and let $U$ be abelian pogroup. Then the amalgam is poembeddable in the class of pocancellative commutative pomonoids.
\end{theorem}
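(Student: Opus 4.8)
The plan is to reduce the problem to the case where the core $U$ is an \emph{abelian pogroup} sitting inside two abelian pogroups, for there the amalgamated free product is itself a pogroup and hence automatically pocancellative. Since we are in the commutative setting, the amalgamated free product of $[U;S_1,S_2]$ is the tensor product $S_1\otimes_U S_2$, and by the first part of the argument of Theorem~\ref{p1} (which only uses the poextension property, not convexity) an amalgam poembeds in its amalgamated free product as soon as $U$ has the poextension property in each factor. The obstruction is that $S_1\otimes_U S_2$ itself need not be pocancellative; to remedy this I would first pass to groups of fractions so that the ambient tensor product becomes a group.

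First I would embed each $S_i$ into its pogroup of fractions. Since $S_i$ is commutative and pocancellative, the Grothendieck group $G_i=\{[s,t]:s,t\in S_i\}$, with $[s,t]$ representing $st^{-1}$, carries the relation $[s,t]\le[s',t']$ if and only if $st'\le s't$ in $S_i$; a routine verification (pocancellativity giving both well-definedness and antisymmetry) shows this makes $G_i$ an abelian pogroup and the canonical map $\kappa_i\colon S_i\to G_i$ an order embedding. As $U$ is already a pogroup, $\kappa_i\varphi_i$ embeds $U$ as a common subpogroup of $G_1$ and $G_2$, yielding a pomonoid amalgam $[U;G_1,G_2]$ of pogroups.

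The key step, and the main obstacle, is to show that a subpogroup $U$ of an abelian pogroup $G$ has the poextension property in $G$; equivalently, that for any right $U$-poset $X$ the map $x\mapsto x\otimes1$ from $X$ to $X\otimes_U G$ is an order embedding. (Note that Theorem~\ref{p2} is of no help here, since a subpogroup need not be pounitary.) I would argue directly from the scheme defining the order on a tensor product. Assume $x\otimes1\le x'\otimes1$; unwinding the scheme produces $x_1,\dots,x_n\in X$, $g_2,\dots,g_n\in G$ and $u_1,\dots,u_n,t_1,\dots,t_n\in U$ satisfying $x\le x_1u_1$, $x_{k-1}t_{k-1}\le x_ku_k$, $x_nt_n\le x'$ on the $X$-side, and $u_1\le t_1g_2$, $u_kg_k\le t_kg_{k+1}$, $u_ng_n\le t_n$ on the $G$-side. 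Multiplying the $G$-side inequalities together and cancelling $g_2\cdots g_n$ in the group $G$ gives $u_1\cdots u_n\le t_1\cdots t_n$ in $U$. On the $X$-side I would telescope: since each $t_k$ is invertible in $U$, successively acting by $t_k^{-1}$ yields $x\le x_nC_n$ with $C_n=(u_1\cdots u_n)(t_1\cdots t_{n-1})^{-1}$, and acting on $x_nt_n\le x'$ by $t_n^{-1}C_n$ gives $x_nC_n\le x'(t_n^{-1}C_n)$. As $t_n^{-1}C_n=(u_1\cdots u_n)(t_1\cdots t_n)^{-1}\le1$ by the relation just derived, monotonicity of the action gives $x'(t_n^{-1}C_n)\le x'$, whence $x\le x'$. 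By commutativity the left-handed version follows symmetrically.

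Finally I would assemble the pieces. The monoid underlying $G_1\otimes_U G_2$ is the pushout of the abelian groups $G_1,G_2$ over the common subgroup $U$, and this is again an abelian group (each $g_1\otimes g_2$ has inverse $g_1^{-1}\otimes g_2^{-1}$); equipped with its tensor-product order it is therefore an abelian pogroup, in particular a pocancellative commutative pomonoid. By the poextension property just established, the canonical maps $\lambda_i\colon G_i\to G_1\otimes_U G_2$ are order embeddings agreeing on $U$. Composing, $\theta_i=\lambda_i\kappa_i\colon S_i\to G_1\otimes_U G_2$ are order embeddings with $\theta_1\varphi_1=\theta_2\varphi_2$, so $[U;S_1,S_2]$ is poembeddable in the pocancellative commutative pomonoid $G_1\otimes_U G_2$, as required.
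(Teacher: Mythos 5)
Your proof is correct, but it takes a genuinely different route from the paper's. The paper stays inside $S_1\otimes_U S_2$: it quotes \cite[Theorem 3]{bf-2011} (pogroups are poamalgamation bases) to obtain weak poembeddability of $[U;S_1,S_2]$ into $S_1\otimes_U S_2$, upgrades this to strong poembeddability via the map $\varphi_2$ of Theorem~\ref{direct-limit-theorem}, and then proves directly that $S_1\otimes_U S_2$ is itself pocancellative by collapsing a tensor-product scheme --- invertibility of the $u_i,v_i\in U$ yields $s_1t_1p\le s_1't_1$ and $p^{-1}s_2t_2\le s_2't_2$ with $p=u_1^{-1}v_1\cdots u_n^{-1}v_n$, after which pocancellativity of the factors finishes the job. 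You instead enlarge the ambient object: you pass to the pogroups of fractions $G_i=G(S_i)$ (exactly the construction of the lemma that follows this theorem in the paper), prove from first principles that a subpogroup of an abelian pogroup has the poextension property --- your telescoping argument is sound, and it is in effect a hands-on commutative instance of the fact that pogroups are poamalgamation bases, which the paper uses as a black box --- and land in $G_1\otimes_U G_2$, which is an abelian pogroup and hence automatically pocancellative. What your route buys is self-containedness (no appeal to \cite{bf-2011}) and a structural, rather than computational, reason for pocancellativity of the target. What it gives up: the paper's witness is the amalgamated free product $S_1\otimes_U S_2$ itself, and the paper additionally obtains \emph{strong} poembeddability there, whereas you establish only the weak poembedding demanded by the literal statement and say nothing about $\theta_1(S_1)\cap\theta_2(S_2)$. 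Both arguments prove the theorem as stated.
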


\begin{proof}
It is known from  \cite[Theorem 3]{bf-2011} that pogroups are  poamalgamation bases in the class of pomonoids. Consequently $[U; S_1, S_2]$ is weakly poembeddable in its amalgamated free product, $S_1\otimes_U S_2$. If $s_1\otimes1=1\otimes s_2$ in $S_1\otimes_U S_2$ then applying the map $\varphi_2$ in Theorem~\ref{direct-limit-theorem} and using the strong embeddability of $[U;S_1,S_2]$ in $S_1\ast_U S_2$ in the category of pomonoids we can easily deduce that $s_1=s_2\in U$ and the amalgam is strongly posembeddable in $S_1\otimes_U S_2$.

To prove that $S_1 \otimes S_2$ is pocancellative suppose that $(s_1 \otimes  s_2)$ $(t_1 \otimes  t_2)$ $\le$ $(s'_1 \otimes  s'_2)$ $(t_1 \otimes t_2)$ so that $(s_1 t_1 \otimes s_2 t_2)$  $\le$ $(s'_1 t_1 \otimes s'_2 t_2)$. Hence, there exists a scheme of inequality such that
\begin{align*}
s_1 t_1& \le x_{1}u_{1} &  u_{1} s_2 t_2&  \le v_{1} y_{2}\\
x_{1}v_{1} & \le x_{2}u_{2} &  u_{2}y_{2} &  \le v_{2} y_{3}\\
\vdots &  & \vdots\\
x_{n-1}v_{n-1} & \le x_nu_n& u_{n}y_{n}&  \le v_{n} s'_2 t_2\\
x_{n}v_{n} & \le s'_1 t_1 &
\end{align*}
where $x_1, \dots, x_n \in S_1$, $y_2, \dots, y_n \in S_2$, and $u_1, \dots, u_n, v_1, \dots, v_n \in U$. 
Hence $s_1 t_1 p$ $\le$ $s'_1 t_1$ and $p^{-1} s_2 t_2$ $\le$ $s'_2 t_2$, where $p = u_1^{-1} v_1  u_2^{-1} v_2 \dots  u_n^{-1} v_n$ . Since $S_i$ is pocancellative and commutative, then $s_1  p$ $\le$ $s'_1$ and $p^{-1} s_2$ $\le$ $s'_2$ from which we can easily deduce that $s_1 \otimes s_2$ $\le$ $s'_1 \otimes  s'_2$.
\end{proof} 

\begin{lemma}
Let $S$ be a commutative pocancellative pomonoid. Then $S$ is poembeddable in a pogroup.
\end{lemma}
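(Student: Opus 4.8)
The plan is to imitate the classical construction of the group of fractions of a commutative cancellative monoid and then to equip it with a compatible order. First I would set $G = (S \times S)/\!\sim$, where $(a,b)\sim(c,d)$ precisely when $ad = bc$; commutativity and cancellativity make $\sim$ an equivalence relation. Writing $[a,b]$ for the class of $(a,b)$ (to be thought of as the formal quotient $ab^{-1}$), I would define the multiplication $[a,b][c,d] = [ac,bd]$, which is well defined and turns $G$ into an abelian group with identity $[1,1]$ and inverses $[a,b]^{-1} = [b,a]$. The map $\iota : S \to G$, $s \mapsto [s,1]$, is then a monoid morphism, and it is injective because $[s,1] = [t,1]$ forces $s = t$ by cancellativity. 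All of this is exactly the unordered construction and needs only cancellativity.

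The new content is the order, which I would define by
$$
[a,b] \le [c,d] \iff ad \le bc \text{ in } S.
$$
The first step is to check that this is independent of the chosen representatives: if $(a,b)\sim(a',b')$ and $(c,d)\sim(c',d')$ and $ad\le bc$, then multiplying through by $b'd'$ and using the relations $ab'=a'b$ and $cd'=c'd$ rearranges the inequality into $(a'd')(bd) \le (b'c')(bd)$, whence $a'd'\le b'c'$ by pocancellativity. This is the place where mere cancellativity would not suffice.

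Next I would verify that $\le$ is a partial order. Reflexivity is immediate; antisymmetry follows from antisymmetry of $\le$ on $S$, since $ad\le bc$ together with $bc\le ad$ gives $ad=bc$, that is $[a,b]=[c,d]$; and transitivity again uses pocancellativity, for from $ad\le bc$ and $cf\le de$ one obtains $adf\le bcf\le bde$, so that $d(af)\le d(be)$ and hence $af\le be$. Compatibility with the group operation is a one-line computation: $ad\le bc$ yields $adef\le bcef$, i.e.\ $[ae,bf]\le[ce,df]$, so $G$ is a pogroup. Finally $\iota$ is an order embedding, since $\iota(s)\le\iota(t)$ reads $s\cdot 1\le 1\cdot t$, which is exactly $s\le t$.

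The group-theoretic verifications are routine and identical to the unordered case, so the only genuine work lies in the order axioms. I expect the main obstacle to be establishing that $\le$ is well defined on equivalence classes and is transitive; it is precisely in these two points that pocancellativity (rather than plain cancellativity) is essential.
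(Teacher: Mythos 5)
Your proposal is correct and follows essentially the same route as the paper: the group of fractions $(S\times S)/\!\sim$ with $[a,b]\le[c,d]$ iff $ad\le bc$, embedded via $s\mapsto[s,1]$. You are in fact more explicit than the paper about where pocancellativity (as opposed to mere cancellativity) is needed, namely in the well-definedness and transitivity of the order, which the paper dismisses as straightforward.
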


\begin{proof}
Consider the pomonoid  $S \times S$ with partial order relation:
$$
(s, t) \le (s', t') \hbox{\rm\  if and only if } s \le s' \hbox{\rm\ and }t' \le t.
$$ Define $G(S) = (S \times S)/ \Delta$, where  
$$
\Delta = \lbrace ((s_1, t_1), (s_2, t_2)): s_1 t_2 = s_2 t_1 \rbrace.
$$
Clearly $\Delta$ is monoid congruence. Denote a typical element of $G(S)$ by $[(s, t)]$ and note that for all $s,t \in S, [(s,s)]=[(t,t)]=[(1,1)]$ is the identity of $G(S)$.  It is possible to endow the commutative monoid $G(S)$ with the following order
$$
[(s, t)]\le[(s', t')]\text{ if and only if }st' \le s't.
$$
It is straightforward to prove that this is a compatible partial ordered relation and it is clear that the map $S \times S \rightarrow G(S)$ is then monotone. Hence $\Delta$ is a pomonoid congruence. 
For any $s,t,p,q, \in S$ it is clear that $[(s,t)] = [(sq,tp)][(p,q)]$ and so $G(S)$ is an abelian pogroup.
Define a monoid morphism $\chi: S \rightarrow G(S)$ by $\chi(s) = [(s, 1)]$. Then it is obvious that  $s \le t$ if and only if $[(s, 1)] \le [(t, 1)]$. Consequently, $\chi$ is an order embedding as required.
\end{proof}

\begin{theorem}
Any commutative pocancellative pomonoid amalgam is weakly poembeddable in a commutative pomonoid.
\end{theorem}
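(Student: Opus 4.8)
The plan is to reduce the problem to the case, already settled by the previous theorem, in which the core of the amalgam is an abelian pogroup; this reduction is effected by passing to groups of fractions.

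First I would write $[U;S_1,S_2]$ for the given amalgam, so that $U,S_1,S_2$ are commutative pocancellative pomonoids and the $\varphi_i\colon U\to S_i$ are order embeddings. By the preceding lemma each of $U,S_1,S_2$ is poembeddable in its group of fractions, and I would use the explicit construction $G(S)=(S\times S)/\Delta$ together with the order embedding $\chi_S\colon S\to G(S),\ s\mapsto[(s,1)]$ given there. Each of $G(U),G(S_1),G(S_2)$ is an abelian pogroup, hence commutative and (being a group with compatible order) pocancellative.

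Next I would promote $[U;S_1,S_2]$ to an amalgam of pogroups $[G(U);G(S_1),G(S_2)]$. For an order embedding $\varphi\colon U\to S$ of commutative pocancellative pomonoids, the induced map $G(\varphi)\colon G(U)\to G(S),\ [(u,v)]\mapsto[(\varphi(u),\varphi(v))]$ is again an order embedding: indeed $[(\varphi(u),\varphi(v))]\le[(\varphi(u'),\varphi(v'))]$ in $G(S)$ means $\varphi(uv')\le\varphi(u'v)$, which by the embedding property of $\varphi$ is equivalent to $uv'\le u'v$, i.e.\ to $[(u,v)]\le[(u',v')]$ in $G(U)$. Writing $\psi_i=G(\varphi_i)$, the squares $\chi_{S_i}\varphi_i=\psi_i\chi_U$ commute, since $\chi_{S_i}(\varphi_i(u))=[(\varphi_i(u),1)]=\psi_i([(u,1)])=\psi_i(\chi_U(u))$. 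Thus $[G(U);G(S_1),G(S_2)]$ is a genuine pomonoid amalgam whose core $G(U)$ is an abelian pogroup.

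Finally I would invoke the previous theorem, which applies verbatim to the pocancellative commutative amalgam $[G(U);G(S_1),G(S_2)]$ with abelian pogroup core: it supplies a pocancellative commutative pomonoid $W$ and order embeddings $\theta_i\colon G(S_i)\to W$ with $\theta_1\psi_1=\theta_2\psi_2$. Setting $\eta_i=\theta_i\chi_{S_i}\colon S_i\to W$, each $\eta_i$ is an order embedding as a composite of order embeddings, and the commuting squares above give $\eta_1\varphi_1=\theta_1\psi_1\chi_U=\theta_2\psi_2\chi_U=\eta_2\varphi_2$. Hence $[U;S_1,S_2]$ is poembeddable---in particular weakly poembeddable---in the commutative pomonoid $W$. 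I expect the only point requiring genuine care to be the verification that $G(\varphi_i)$ is an order embedding rather than merely monotone and injective; the remainder is a routine diagram chase assembling the two preceding results.
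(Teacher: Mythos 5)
Your argument is correct, and its first half coincides exactly with the paper's: both pass to the groups of fractions via the construction $G(S)=(S\times S)/\Delta$ of the preceding lemma and check that the induced maps $G(\varphi_i)\colon G(U)\to G(S_i)$ are order embeddings making the relevant squares commute. The two proofs diverge in how they then dispose of the pogroup amalgam $[G(U);G(S_1),G(S_2)]$. The paper invokes an external result (Corollary 45 of Sohail's thesis) to embed this amalgam of pogroups into a pogroup, and then transfers the resulting weak poembeddability back to $S_1\ast_US_2$ and hence, via Lemma~\ref{poamalgam-weak-embeddability-condition-lemma}, to the commutative pomonoid $S_1\otimes_US_2$. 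You instead observe that $G(U)$ is an abelian pogroup and that the $G(S_i)$, being pogroups, are pocancellative and commutative, so the paper's own immediately preceding theorem applies and poembeds $[G(U);G(S_1),G(S_2)]$ into the pocancellative commutative pomonoid $G(S_1)\otimes_{G(U)}G(S_2)$; composing with the $\chi_{S_i}$ finishes the proof. Your route is more self-contained (it relies only on results already established in the paper, ultimately on pogroups being poamalgamation bases) and lands in a target that is moreover pocancellative, whereas the paper's route produces the arguably more canonical target $S_1\otimes_US_2$. Your identification of the only delicate point --- that $G(\varphi_i)$ is an order embedding and not merely monotone and injective --- matches the one verification the paper flags as ``easy to check,'' and your proof of it is right.
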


\begin{proof}
Suppose $[U; S_1, S_2;\varphi_1,\varphi_2]$ is a commutative pocancellative pomonoid amalgam. Consider the amalgam of pogroups $[G(U); G(S_1), G(S_2);\varphi'_i,\varphi'_2]$ where $\varphi'_i: G(U) \rightarrow G(S_i)$ is given by $\varphi'_i([(u,v)] \mapsto [(\varphi_i(u), \varphi_i(v))]$. It is easy to check that $\varphi'_i$ is an order embedding. By \cite[Corollary 2]{sohail-2011}, $[G(U); G(S_1), G(S_2)]$ is poembeddable into a pogroup. Hence, $[U; S_1, S_2]$ is also weakly poembeddable in the pomonoid $S_1\ast_US_2$ and so by Lemma~\ref{poamalgam-weak-embeddability-condition-lemma} in $S_1\otimes_US_2$, which is a commutative pomonoid.
\end{proof}

It is not clear that the amalgam can be strongly poembedded into a commutative pomonoid.

\bigskip

The authors would like to thank the anonymous referee for many helpful comments and suggestions which greatly improved the exposition of this work, and for bringing to our attention the work done in~\cite{sohail-2014}.

\end{document}